\theoremstyle{plain}
\newtheorem{theorem}{Theorem}[section]
\newtheorem{lemma}[theorem]{Lemma}
\newtheorem{corollary}[theorem]{Corollary}
\newtheorem{proposition}[theorem]{Proposition}
\newtheorem{observation}[theorem]{Observation}
\newtheorem{remark}[theorem]{Remark}
\begin{document}

\newcommand{\diam}{\textnormal{diam}}

\title{The fractional $k$-metric dimension of graphs}

\author{{\bf{Cong X. Kang}}$^1$, {\bf{Ismael G. Yero}}$^2$ and {\bf{Eunjeong Yi}}$^3$\\
\small Texas A\&M University at Galveston, Galveston, TX 77553, USA$^{1,3}$\\
\small Universidad de C\'{a}diz, Av. Ram\'{o}n Puyol s/n, 11202 Algeciras, Spain$^2$\\
{\small\em kangc@tamug.edu}$^1$; {\small\em ismael.gonzalez@uca.es}$^2$; {\small\em yie@tamug.edu}$^3$}

\maketitle

\date{}

\begin{abstract}
Let $G$ be a graph with vertex set $V(G)$. For any two distinct vertices $x$ and $y$ of $G$, let $R\{x, y\}$ denote the set of vertices $z$ such that the distance from $x$ to $z$ is not equal to the distance from $y$ to $z$ in $G$. For a function $g$ defined on $V(G)$ and for $U \subseteq V(G)$, let $g(U)=\sum_{s \in U}g(s)$. Let $\kappa(G)=\min\{|R\{x,y\}|:  x\neq y \mbox{ and } x,y \in V(G)\}$. For any real number $k \in [1, \kappa(G)]$, a real-valued function $g: V(G) \rightarrow [0,1]$ is a \emph{$k$-resolving function} of $G$ if $g(R\{x,y\}) \ge k$ for any two distinct vertices $x,y \in V(G)$. The \emph{fractional $k$-metric dimension}, $\dim^k_f(G)$, of $G$ is $\min\{g(V(G)): g \mbox{ is a $k$-resolving function of } G\}$. In this paper, we initiate the study of the fractional $k$-metric dimension of graphs. For a connected graph $G$ and $k \in [1, \kappa(G)]$, it's easy to see that $k \le \dim_f^k(G) \le \frac{k|V(G)|}{\kappa(G)}$; we characterize graphs $G$ satisfying $\dim_f^k(G)=k$ and $\dim_f^k(G)=|V(G)|$, respectively. We show that $\dim_f^k(G) \ge k \dim_f(G)$ for any $k \in [1, \kappa(G)]$, and we give an example showing that $\dim_f^k(G)-k\dim_f(G)$ can be arbitrarily large for some $k \in (1, \kappa(G)]$; we also describe a condition for which $\dim_f^k(G)=k\dim_f(G)$ holds. We determine the fractional $k$-metric dimension for some classes of graphs, and conclude with two open problems, including whether $\phi(k)=\dim_f^k(G)$ is a continuous function of $k$ on every connected graph $G$.
\end{abstract}

\noindent\small {\bf{Keywords:}} fractional metric dimension, fractional $k$-metric dimension, $k$-metric dimension, trees, cycles, wheel graphs, the Petersen graph, a bouquet of cycles, complete multi-partite graphs, grid graphs\\
\small {\bf{2010 Mathematics Subject Classification:}} 05C12, 05C38, 05C05\\


\section{Introduction}

Let $G$ be a finite, simple, undirected, and connected graph with vertex set $V(G)$ and edge set $E(G)$. For $v \in V(G)$, the \emph{open neighborhood} of $v$ is $N(v)=\{u \in V(G): uv \in E(G)\}$, and the \emph{closed neighborhood} of $v$ is $N[v]=N(v) \cup \{v\}$. The \emph{degree} of a vertex $v \in V(G)$, denoted by $\deg(v)$, is $|N(v)|$; a \emph{leaf} is a vertex of degree one, and a \emph{major vertex} is a vertex of degree at least three. The \emph{distance} between two vertices $x, y \in V(G)$, denoted by $d(x,y)$, is the length of a shortest path between $x$ and $y$ in $G$. The \emph{diameter}, $\diam(G)$, of a graph $G$ is $\max\{d(x,y): x, y \in V(G)\}$. The \emph{complement} of $G$, denoted by $\overline{G}$, is the graph whose vertex set is $V(G)$ and $xy \in E(\overline{G})$ if and only if $xy \not\in E(G)$ for $x,y \in V(G)$. We denote by $K_n$ and $P_n$ the complete graph and the path on $n$ vertices, respectively.\\

For two distinct vertices $x, y \in V(G)$, let $R\{x, y\}=\{z \in V(G): d(x,z) \neq d(y,z)\}$. A subset $S \subseteq V(G)$ is called a \emph{resolving set} of $G$ if $|S \cap R\{x,y\}| \ge 1$ for any two distinct vertices $x$ and $y$ in $G$. The \emph{metric dimension}, $\dim(G)$, of $G$ is the minimum cardinality of $S$ over all resolving sets of $G$. Since metric dimension is suggestive of the dimension of a vector space in linear algebra, sometimes a minimum resolving set of $G$ is called a basis of $G$. The concept of metric dimension was introduced independently by Slater~\cite{slater}, and by Harary and Melter~\cite{HM}. Applications of metric dimension can be found in network discovery and verification~\cite{dim_app3}, robot navigation~\cite{dim_app1}, sonar~\cite{slater}, combinatorial optimization~\cite{dim_app2}, chemistry~\cite{dim_chem}, and strategies for the mastermind game~\cite{dim_app4}. It was noted in~\cite{NPhard} that determining the metric dimension of a graph is an NP-hard problem. Metric dimension has been extensively studied. For a survey on metric dimension in graphs, see~\cite{dim_survey1, dim_survey2}. The effect of the deletion of a vertex or of an edge on the metric dimension of a graph was raised as a fundamental question in graph theory in~\cite{dim_survey2}; the question is essentially settled in~\cite{ve_deletion}.\\

If a minimum number of requisite robots are installed in a network to identify the exact location of an intruder in the network, one malfunctioning robot can lead to failure of detection. Thus, it is natural to build a certain level of redundancy into the detection system. As a generalization of metric dimension, $k$-metric dimension was introduced first by Estrada-Moreno et al.~\cite{kdim1} and, independently, by Adar and Epstein~\cite{kdim3} soon afterwards. Let $\kappa(G)=\min\{|R\{x,y\}|:  x\neq y \mbox{ and } x,y \in V(G)\}$. For a positive integer $k \in \{1,2,\ldots, \kappa(G)\}$, a set $S \subseteq V(G)$ is called a \emph{$k$-resolving set} of $G$ if $|S \cap R\{x,y\}| \ge k$ for any two distinct vertices $x$ and $y$ in $G$. The \emph{$k$-metric dimension}, $\dim^k(G)$\footnote{In fact, the notation of this parameter has been $\dim_k(G)$ in previous works. However, we rather prefer to use $\dim^k(G)$ here, to facilitate the notation of $\dim_f^k(G)$.}, of $G$ is the minimum cardinality over all $k$-resolving sets of $G$. It was shown in~\cite{kdim1} that $k$-metric dimension of a connected graph $G$ exists for every $k \in \{1,2,\ldots, \kappa(G)\}$, and $G$ is called $\kappa(G)$-metric dimensional. For an application of $k$-metric dimension to error-correcting codes, see~\cite{kdim2}. For other articles on the $k$-metric dimension of graphs, see~\cite{kdim4, kdim5, kdim6}.\\

The fractionalization of various graph parameters has been extensively studied (see~\cite{fractionalization}). Currie and Oellermann~\cite{fracdim_o1} defined fractional metric dimension as the optimal solution to a linear programming problem, by relaxing a condition of the integer programming problem for metric dimension. A formulation of fractional metric dimension as a linear programming problem can be found in~\cite{fracdim_o2}. Arumugam and Mathew~\cite{fracdim1} officially studied the fractional metric dimension of graphs. For a function $g$ defined on $V(G)$ and for $U \subseteq V(G)$, let $g(U)=\sum_{s \in U}g(s)$. A real-valued function $g: V(G) \rightarrow [0,1]$ is a \emph{resolving function} of $G$ if $g(R\{x, y\}) \ge 1$ for any two distinct vertices $x, y \in V(G)$. The \emph{fractional metric dimension} of $G$, denoted by $\dim_f(G)$, is $\min\{g(V(G)): g \mbox{ is a resolving function of } G\}$. Notice that $\dim_f(G)$ reduces to $\dim(G)$, if the codomain of resolving functions is restricted to $\{0,1\}$. For more articles on the fractional metric dimension, as well as the closely related fractional strong metric dimension, of graphs, see~\cite{fracdim2, fracdim_w2, fracdim_w1, fracdim_w3, frac_kang, frac_kyy, fracsdim, fracdim_yi}.\\

Next, we introduce fractional $k$-metric dimension, which can be viewed as a generalization of $\dim_f(G)$ as well as a fractionalization of $\dim^k(G)$. For any real number $k \in [1, \kappa(G)]$, a real-valued function $h: V(G) \rightarrow [0,1]$ is a \emph{$k$-resolving function} of $G$ if $h(R\{x, y\}) \ge k$ for any two distinct vertices $x, y \in V(G)$. The \emph{fractional $k$-metric dimension} of $G$, denoted by $\dim_f^k(G)$, is $\min\{h(V(G)): h \mbox{ is a $k$-resolving function of } G\}$; notice that $\dim^1_f(G)=\dim_f(G)$. Note that $\dim_f^k(G)$ reduces to $\dim^k(G)$ when the codomain of $k$-resolving functions is restricted to $\{0,1\}$ and $k \in [1, \kappa(G)]$ is restricted to positive integers.\\

In this paper, we initiate the study of the fractional $k$-metric dimension of graphs. For a connected graph $G$, let $\kappa(G)=\min\{|R\{x,y\}|:  x\neq y \mbox{ and } x,y \in V(G)\}$. The paper is organized as follows. In section~2, we compare $\dim^k_f(G)$ with $\dim^k (G)$ for certain $k$, and we recall some results on the fractional metric dimension of graphs. In section~3, we prove that $\dim_f^k(G) \ge k \dim_f(G)$ for any $k \in [1, \kappa(G)]$; we describe a condition for which $\dim_f^k(G)=k\dim_f(G)$ holds for all $k \in [1, \kappa(G)]$. For $k\in[1, \kappa(G)]$, we show that $k \le \dim_f^k(G) \le \frac{k}{\kappa(G)}|V(G)|$, which implies $k \le \dim_f^k(G) \le |V(G)|$; we characterize graphs $G$ satisfying $\dim_f^k(G)=k$ and $\dim_f^k(G)=|V(G)|$, respectively. In section~4, for $k \in [1, \kappa(G)]$, we determine the fractional $k$-metric dimension of trees, cycles, wheel graphs, the Petersen graph, a bouquet of cycles (i.e., the vertex sum of cycles at one common vertex), complete multi-partite graphs, and grid graphs (i.e., the Cartesian product of two paths). Along the way, we give an example showing that $\dim_f^k(G)-k\dim_f(G)$ can be arbitrarily large for some $k \in (1, \kappa(G)]$. We conclude with some open problems.


\section{Preliminaries}

In this section, we make some observations involving $\dim_f^k(G)$ for $k \in [1, \kappa(G)]$, or $\dim^k(G)$ for $k \in \{1,2,\ldots, \kappa(G)\}$. We also recall some results on the fractional metric dimension of graphs. We begin with some observations. Two distinct vertices $x,y \in V(G)$ are called \emph{twin vertices} if $N(x)-\{y\}=N(y)-\{x\}$.

\begin{observation}
Let $G$ be a connected graph and let $k \in [1, \kappa(G)]$. If two distinct vertices $x$ and $y$ are twin vertices in $G$, then $R\{x, y\}=\{x, y\}$, and thus $\kappa(G)=2$ and $g(x) +g(y) \ge k$ for any $k$-resolving function $g$ of $G$.
\end{observation}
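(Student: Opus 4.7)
The plan proceeds in three short stages: first, establish the set equality $R\{x,y\} = \{x,y\}$; second, deduce $\kappa(G) = 2$; third, apply the definition of a $k$-resolving function. The containment $\{x,y\} \subseteq R\{x,y\}$ is automatic, since for a connected graph $G$ with $x \neq y$ we have $d(x,x) = 0 < d(y,x)$ and symmetrically $d(y,y) = 0 < d(x,y)$. Hence the real task is the reverse containment, namely $d(x,z) = d(y,z)$ for every $z \in V(G)\setminus\{x,y\}$.

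I would split this task into two cases based on whether $xy \in E(G)$. If $xy \notin E(G)$ (false twins), then $N(x) = N(y)$, so on any shortest $x$-$z$ path $x, v_1, \ldots, v_t = z$ we have $v_1 \in N(x) = N(y)$, and $v_1 \neq y$ since $y \notin N(x)$; this makes $y, v_1, \ldots, v_t$ a walk from $y$ to $z$ of length $t$, yielding $d(y,z) \le d(x,z)$, and equality follows by symmetry. The true-twin case $xy \in E(G)$ (so $N(x)\setminus\{y\} = N(y)\setminus\{x\}$) is slightly more delicate, since a priori one might have $v_1 = y$ on some shortest $x$-$z$ path; if so, then $v_2 \in N(y)\setminus\{x\} = N(x)\setminus\{y\}$ would yield a shorter walk $x, v_2, \ldots, v_t$ from $x$ to $z$, contradicting the shortest-path hypothesis. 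Thus $v_1 \neq y$, i.e., $v_1 \in N(x)\setminus\{y\} = N(y)\setminus\{x\}$, and the rerouting argument goes through as before. I expect this subtle elimination of the potential trap $v_1 = y$ to be the only nonautomatic step.

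Once $R\{x,y\} = \{x,y\}$ is in hand, the remaining conclusions are immediate. Since $|R\{x,y\}| = 2$, we have $\kappa(G) \le 2$; and since $\{u,v\} \subseteq R\{u,v\}$ for every pair of distinct vertices $u,v$ in a connected graph (by the same argument as above), $\kappa(G) \ge 2$, so $\kappa(G) = 2$. Finally, for any $k$-resolving function $g$, the defining inequality $g(R\{x,y\}) \ge k$ specializes to $g(x) + g(y) \ge k$.
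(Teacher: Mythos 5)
Your proof is correct. The paper states this as an Observation with no proof at all, treating it as immediate; your argument supplies exactly the routine verification the authors had in mind: the two-case analysis (false twins with $N(x)=N(y)$ versus true twins with $xy\in E(G)$) showing every $z\notin\{x,y\}$ is equidistant from $x$ and $y$, the fact that $\{u,v\}\subseteq R\{u,v\}$ for every pair forces $\kappa(G)\ge 2$, and the definition of a $k$-resolving function applied to $R\{x,y\}=\{x,y\}$. Your handling of the one genuinely non-trivial point --- ruling out $v_1=y$ on a shortest $x$--$z$ path in the true-twin case, using that shortest paths have distinct vertices so $v_2\in N(y)\setminus\{x\}$ --- is exactly right, so the proposal stands as a complete proof of the observation.
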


\begin{observation}\label{obs2} Let $G$ be a connected graph.
\begin{itemize}\setlength\itemsep{0em}
\item[(a)] For $k \in [1, \kappa(G)]$, $\dim_f(G) \le \dim_f^k(G)$.
\item[(b)] For $k \in \{1,2,\ldots, \kappa(G)\}$, $\dim_f^k(G) \le \dim^k(G)$.
\item[(c)] \emph{\cite{fracdim1, kdim1}} For $k \in \{1,2,\ldots, \kappa(G)\}$, $\dim_f(G) \le \dim(G) \le \dim^k(G)$.
\end{itemize}
\end{observation}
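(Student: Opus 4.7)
The plan is to prove each of the three comparisons by showing that one feasible set of the underlying optimization is contained in, or can be injected into, the other, so that the minimum over the smaller or more restrictive class can only be larger.

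For part (a), I would note that any $k$-resolving function $g$ with $k \ge 1$ automatically satisfies $g(R\{x,y\}) \ge k \ge 1$ for every pair of distinct vertices $x,y \in V(G)$, and hence is a ($1$-)resolving function in the sense of Arumugam--Mathew. Thus the collection of $k$-resolving functions is a subset of the collection of resolving functions of $G$, so taking the minimum of $g(V(G))$ over the former cannot produce a value smaller than the minimum over the latter, giving $\dim_f(G) \le \dim_f^k(G)$.

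For part (b), let $k$ be a positive integer in $\{1,2,\ldots,\kappa(G)\}$ and let $S$ be a minimum $k$-resolving set of $G$, so that $|S| = \dim^k(G)$. I would take $g$ to be the indicator function $\chi_S \colon V(G) \to \{0,1\} \subseteq [0,1]$. For any two distinct vertices $x,y \in V(G)$ one has $g(R\{x,y\}) = |S \cap R\{x,y\}| \ge k$, so $\chi_S$ is a $k$-resolving function of $G$, and therefore $\dim_f^k(G) \le \chi_S(V(G)) = |S| = \dim^k(G)$.

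Part (c) would then be immediate: the inequality $\dim_f(G) \le \dim(G)$ is the special case of the argument for (b) taken at $k = 1$ (and is in any case recorded in \cite{fracdim1}), while $\dim(G) \le \dim^k(G)$ follows because any $k$-resolving set $S$ with $k \ge 1$ satisfies $|S \cap R\{x,y\}| \ge k \ge 1$ and is thus a resolving set. I do not foresee a genuine obstacle here; each item is a direct unwinding of the definitions, and the only care required is to track the two senses of ``function'' (real-valued into $[0,1]$ versus $\{0,1\}$-valued indicator) and to remember that increasing the threshold $k$ shrinks rather than enlarges the feasible region.
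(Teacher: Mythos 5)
Your proposal is correct and matches the intended justification: the paper states this as an Observation without proof precisely because each part is the definitional unwinding you give (restricting the feasible class of functions, or restricting the codomain to $\{0,1\}$, can only increase the minimum), and part (c) is simply cited to earlier work. Nothing further is needed.
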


Observation~\ref{obs2} provides inequalities between any two graph parameters among $\dim_f(G)$, $\dim(G)$, $\dim_f^k(G)$ for $k \in [1, \kappa(G)]$, and $\dim^k(G)$ for $k \in \{1,2,\ldots, \kappa(G)\}$, excluding the relation between $\dim(G)$ and $\dim_f^k(G)$. So, it is natural to compare $\dim(G)$ and $\dim_f^k(G)$ for $k \in [1, \kappa(G)]$.

\begin{remark}
(a) The value of $\dim_f^k(G)-\dim(G)$ can be arbitrarily large, as $G$ varies, for some $k \in (1, \kappa(G)]$. For $n \ge 4$, note that $\dim(P_n)=1 \le \dim_f^k(P_n)$ for $k \in [1, n-1]$ and $\dim_f^{n-1}(P_n)=n$ (see Proposition~\ref{fkdim_path}). Thus, $\dim_f^{n-1}(P_n)-\dim(P_n)=n-1$ can be arbitrarily large.

(b) The value of $\dim(G)-\dim_f^k(G)$ can be arbitrarily large, as $G$ varies, for some $k \in [1, \kappa(G)]$. For $n \ge 3$, note that $\dim(K_n)=n-1$ and $\dim_f^k(K_n)=\frac{kn}{2}$ for $k \in [1,2]$ (see Proposition~\ref{fkdim_multipartite}). Now, let $k\in[1,2)$; then $\dim(K_n)-\dim_f^k(K_n)=\frac{(2-k)n}{2}-1$ becomes arbitrarily large as $n \rightarrow\infty$.
\end{remark}

In light of Observation~\ref{obs2}(b), we have the following

\begin{theorem}
The value of $\dim^k(G)-\dim_f^k(G)$ can be arbitrarily large, as $G$ varies, for some $k\in \{1,2,\ldots, \kappa(G)\}$.
\end{theorem}

\begin{proof}
Let $H$ be a connected graph with vertex set $V(H)=\{u_1, u_2, \ldots, u_n\}$, where $n \ge 2$. Let $G$ be the graph obtained from $H$ as follows:
\begin{itemize}\setlength\itemsep{0em}
\item[(i)] for each $i \in \{1,2,\ldots, n\}$, add three vertices $a_{i,1}, b_{i,1}, c_{i,1}$ and three edges $u_ia_{i,1}, u_ib_{i,1}, u_ic_{i,1}$;
\item[(ii)] for each $i  \in \{1,2,\ldots, n\}$, subdivide the edge $u_ia_{i,1}$ ($u_ib_{i,1}$ and $u_ic_{i,1}$, respectively) exactly $s-1$ times so that the edge $u_ia_{i,1}$ ($u_ib_{i,1}$ and $u_ic_{i,1}$, respectively) in (i) becomes the $u_i-a_{i,1}$ path given by $u_i, a_{i,s}, a_{i,s-1}, \ldots, a_{i,1}$ (the $u_i-b_{i,1}$ path given by $u_i, b_{i,s}, b_{i,s-1}, \ldots, b_{i,1}$ and the $u_i-c_{i,1}$ path given by $u_i, c_{i,s}, c_{i,s-1}, \ldots, c_{i,1}$, respectively).
\end{itemize}
For each $i \in \{1,2,\ldots, n\}$, let $T_i$ be the subtree of $G$ consisting of the $u_i-a_{i,1}$ path, the $u_i-b_{i,1}$ path, and the $u_i-c_{i,1}$ path; further, let $P^{i,a}$ be the $a_{i,s}-a_{i,1}$ path, $P^{i,b}$ the $b_{i,s}-b_{i,1}$ path, and $P^{i,c}$ the $c_{i,s}-c_{i,1}$ path. Then, for each $i \in \{1,2,\ldots, n\}$, 
\begin{equation}\label{e0}
\left\{
\begin{array}{c}
R\{a_{i,s}, b_{i,s}\}=V(P^{i,a}) \cup V(P^{i,b}),\\ 
R\{a_{i,s}, c_{i,s}\}=V(P^{i,a}) \cup V(P^{i,c}),\\ 
R\{b_{i,s}, c_{i,s}\}=V(P^{i,b}) \cup V(P^{i,c}).
\end{array}\right.
\end{equation}
We determine $\kappa(G)$, $\dim^k(G)$ for $k \!\in\! \{1,2,\ldots, \kappa(G)\}$, and $\dim_f^k(G)$ for $k \in [1, \kappa(G)]$.

\vspace{.2cc}

\textbf{Claim 1: $\kappa(G)=2s$.} 

\noindent Proof of Claim 1. Let $x$ and $y$ be two distinct vertices of $G$. First, let $x,y \in V(T_i)$ for some $i \in \{1,2,\ldots, n\}$. If $d(u_i, x) \neq d(u_i, y)$, then $R\{x,y\} \supseteq V(T_j)$ with $|R\{x,y\}| \ge |V(T_j)|=3s+1$, where $j \in\{1,2,\ldots,n\}-\{i\}$. If $d(u_i, x)=d(u_i, y)$, say $x \in V(P^{i,a})$ and $y \in V(P^{i,b})$ without loss of generality, then $R\{x,y\}=V(P^{i,a}) \cup V(P^{i,b})$ with $|R\{x,y\}|=|V(P^{i,a})|+|V(P^{i,b})|=2s$. Second, let $x \in V(T_i)$ and $y\in V(T_j)$ for distinct $i, j \in \{1,2,\ldots, n\}$, say $x$ lies on the $u_i-a_{i,1}$ path and $y$ lies on the $u_j-a_{j,1}$ path, without loss of generality. Then $|R\{x,y\}| \ge 2s$, since at most one vertex lying on a $a_{i,1}-a_{j,1}$ geodesic is at equal distance from both $x$ and $y$. So, $\kappa(G)=2s$.~$\Box$

\vspace{.2cc}

\textbf{Claim 2:} For $k \in \{1,2,\ldots, 2s\}$,
$\dim^k(G)=\left\{
\begin{array}{ll}
\frac{3kn}{2} & \mbox{ if $k$ is even},\\[\smallskipamount]
\frac{(3k+1)n}{2} & \mbox{ if $k$ is odd}.
\end{array}\right.$

\noindent Proof of Claim 2. Let $k \in \{1,2,\ldots, 2s\}$. First, we show that $\dim^k(G) \ge \frac{3kn}{2}$ for an even $k$, and $\dim^k(G) \ge \frac{(3k+1)n}{2}$ for an odd $k$. Let $S$ be a minimum $k$-resolving set of $G$. For each $i \in \{1,2,\ldots, n\}$, (\ref{e0}) implies
\begin{equation}\label{e1}
\left\{
\begin{array}{c}
|S \cap R\{a_{i,s}, b_{i,s}\}|=|S \cap V(P^{i,a})| + |S \cap V(P^{i,b})| \ge k,\\
|S \cap R\{a_{i,s},c_{i,s}\}|=|S \cap V(P^{i,a})| +|S \cap V(P^{i,c})| \ge k,\\
|S \cap R\{b_{i,s}, c_{i,s}\}|=|S \cap V(P^{i,b})|+|S \cap V(P^{i,c})| \ge k.
\end{array}\right.
\end{equation}
Suppose $k$ is even. By summing over the three inequalities in (\ref{e1}), we obtain $|S \cap V(P^{i,a})|+|S \cap V(P^{i,b})|+|S \cap V(P^{i,c})| \ge \frac{3k}{2}$, and thus $|S \cap V(T_i)| \ge \frac{3k}{2}$ for each $i \in \{1,2,\ldots, n\}$. So, $|S \cap V(G)| \ge \sum_{i=1}^{n} \frac{3k}{2}=\frac{3kn}{2}$, and hence $\dim^k(G) \ge \frac{3kn}{2}$. Now, suppose $k$ is odd. If $|S \cap V(P^{i,a})|\le\lfloor\frac{k}{2}\rfloor$ and $|S \cap V(P^{i,b})| \le \lfloor\frac{k}{2}\rfloor$ for some $i \in \{1,2,\ldots, n\}$, then $|S \cap R\{a_{i, s}, b_{i, s}\}|=|S \cap V(P^{i,a})|+|S \cap V(P^{i,b})| \le \lfloor\frac{k}{2}\rfloor+\lfloor\frac{k}{2}\rfloor=k-1$, contradicting the assumption that $S$ is a $k$-resolving set of $G$; thus, $|S \cap V(P^{i,t})| \le \lfloor\frac{k}{2}\rfloor$ for at most one $t \in\{a,b,c\}$ for each $i \in \{1,2,\ldots, n\}$. Let $|S \cap V(P^{i,c})|=\alpha \le \min\{|S \cap V(P^{i,a})|, |S \cap V(P^{i,b})|\}$ for $i \in \{1,2,\ldots, n\}$. Then $|S \cap V(P^{i,a})| \ge k-\alpha$ and $|S \cap V(P^{i,b})| \ge k-\alpha$ from (\ref{e1}), and thus $|S \cap V(T_i)| \ge 2k-\alpha \ge 2k-\lfloor\frac{k}{2}\rfloor=\frac{3k+1}{2}$. So, $|S \cap V(G)| \ge \sum_{i=1}^{n} \frac{3k+1}{2}= \frac{(3k+1)n}{2}$, and hence $\dim^k(G) \ge \frac{(3k+1)n}{2}$.

Second, we show that $\dim^k(G) \le \frac{3kn}{2}$ for an even $k$, and $\dim^k(G) \le \frac{(3k+1)n}{2}$ for an odd $k$. If $k$ is even, let $W_0=\cup_{i=1}^n (\{a_{i,1}, a_{i,2}, \ldots, a_{i, \frac{k}{2}}\} \cup \{b_{i,1}, b_{i,2}, \ldots, b_{i, \frac{k}{2}}\} \cup \{c_{i,1}, c_{i,2}, \ldots, c_{i, \frac{k}{2}}\})$. If $k$ is odd, let $W_1=\cup_{i=1}^n (\{a_{i,1}, a_{i,2}, \ldots, a_{i, \lceil\frac{k}{2}\rceil}\} \cup \{b_{i,1}, b_{i,2}, \ldots, b_{i, \lceil\frac{k}{2}\rceil}\} \cup \{c_{i,1}, c_{i,2}, \ldots, c_{i, \lfloor\frac{k}{2}\rfloor}\})$. Note that $|W_0|=\frac{3kn}{2}$, $|W_1|=\frac{(3k+1)n}{2}$, $|W_0 \cap V(P^{i,a})|=|W_0 \cap V(P^{i,b})|=|W_0 \cap V(P^{i,c})|=\frac{k}{2}$ and $|W_1 \cap V(P^{i,a})|=|W_1 \cap V(P^{i,b})|=\lceil\frac{k}{2}\rceil=1+|W_1 \cap V(P^{i,c})|$ for $i\in\{1,2,\ldots,n\}$. It suffices to show that $W_0$ ($W_1$, respectively) is a $k$-resolving set of $G$ when $k$ is even (odd, respectively). Let $x$ and $y$ be distinct vertices of $G$. Suppose $x,y \in V(T_i)$ for some $i \in \{1,2,\ldots, n\}$. If $d(u_i, x) \neq d(u_i,y)$, then $|W_{\ell} \cap R\{x,y\}| \ge |V(T_j)| \ge \frac{3k}{2} \ge k$ for $j\neq i$ and for $\ell \in \{0,1\}$. If $d(u_i, x)=d(u_i,y)$, say $x\in V(P^{i,a})$ and $y \in V(P^{i,c})$ (other cases can be handled similarly), then $|W_{\ell} \cap R\{x,y\}|=|W_{\ell} \cap (V(P^{i,a}) \cup V(P^{i,c}))| \ge k$ for $\ell\in\{0,1\}$. Now, let $x \in V(T_i)$ and $y \in V(T_j)$ for distinct $i,j\in\{1,2\ldots, n\}$; suppose $d(u_i,x) \le d(u_j,y)$, without loss of generality. Since $R\{x,y\} \supseteq V(T_i)$, $|W_{\ell} \cap R\{x,y\}| \ge |W_{\ell} \cap V(T_i)| \ge \frac{3k}{2}$ for $\ell \in \{0,1\}$. So, $W_0$ ($W_1$, respectively) is a $k$-resolving set of $G$ when $k$ is even (odd, respectively).~$\Box$

\vspace{.2cc}

\textbf{Claim 3:} For $k \in [1, 2s]$, $\dim_f^k(G)=\frac{3kn}{2}$. 

\noindent Proof of Claim 3. Let $k \in [1, 2s]$. First, we show that $\dim_f^k(G) \ge \frac{3kn}{2}$. Let $g: V(G) \rightarrow [0,1]$ be any $k$-resolving function of $G$. From (\ref{e0}), for each $i \in \{1,2,\ldots, n\}$, we have $g(R\{a_{i,s}, b_{i,s}\})\!=\!g(V(P^{i,a}))+ g(V(P^{i,b})) \ge k$, $g(R\{a_{i,s}, c_{i,s}\})\!=\!g(V(P^{i,a}))+ g(V(P^{i,c})) \ge k$, and $g(R\{b_{i,s}, c_{i,s}\})=g(V(P^{i,b}))+ g(V(P^{i,c})) \ge k$. By summing over the three inequalities, we obtain $g(V(P^{i,a}))+ g(V(P^{i,b}))+g(V(P^{i,c})) \ge \frac{3k}{2}$, and thus $g(V(T_i)) \ge \frac{3k}{2}$ for each $i\in\{1,2,\ldots,n\}$. So, $g(V(G)) \ge \sum_{i=1}^{n}\frac{3k}{2}=\frac{3kn}{2}$, and hence $\dim_f^k(G) \ge \frac{3kn}{2}$.

Second, we show that $\dim_f^k(G) \le \frac{3kn}{2}$. Let $h:V(G) \rightarrow [0,1]$ be a function defined by
\begin{equation*}
h(v)=\left\{
\begin{array}{ll}
0 & \mbox{ if } \deg(v) \ge 3,\\
\frac{k}{2s} & \mbox{ if } \deg(v) \le 2.
\end{array}\right.
\end{equation*}
Notice that $h(V(P^{i,a}))=h(V(P^{i,b}))=h(V(P^{i,c}))=\frac{k}{2}$ for each $i \in \{1,2,\ldots, n\}$, and $h(V(G))=\frac{3kn}{2}$. It suffices to show that $h$ is a $k$-resolving function of $G$. Let $x$ and $y$ be distinct vertices of $G$. Suppose $x,y \in V(T_i)$ for some $i \in \{1,2,\ldots, n\}$. If $d(u_i, x) \neq d(u_i,y)$, then $h(R\{x,y\}) \ge h(V(T_j))\ge\frac{3k}{2}$ for $j \neq i$. If $d(u_i, x)=d(u_i,y)$, say $x\!\in\! V(P^{i,a})$ and $y \!\in\! V(P^{i,b})$ without loss of generality, then $h(R\{x, y\})\!=\!h(V(P^{i,a}))+h(V (P^{i,b}))=k$. Now, let $x \in V(T_i)$ and $y \in V(T_j)$ for distinct $i,j\in\{1,2\ldots, n\}$; suppose $d(u_i,x) \le d(u_j,y)$, without loss of generality. Then $h(R\{x, y\}) \ge h(V(T_i))\ge \frac{3k}{2}$. So, $h$ is a $k$-resolving function of $G$.~$\Box$

\vspace{.2cc}

By Claims 2 and~3, we see that, for each odd $k\in \{1,2,\ldots, 2s\}$, $\dim^k(G)-\dim_f^k(G)=\frac{(3k+1)n}{2}-\frac{3kn}{2}=\frac{n}{2}$, which can be arbitrarily large.~\hfill
\end{proof}

Next, we recall some results on the fractional metric dimension of graphs. One can easily see that, for any connected graph $G$ of order at least two, $1 \le \dim_f(G) \le \frac{|V(G)|}{2}$ (see~\cite{fracdim1}). For the characterization of graphs $G$ achieving the lower bound, see Theorem~\ref{thm_frac}(a). Regarding the characterization of graphs $G$ achieving the upper bound, the following result is stated in~\cite{fracdim1} and a correct proof is provided in~\cite{frac_kang}.

\begin{theorem}~\emph{\cite{fracdim1, frac_kang}}
Let $G$ be a connected graph of order at least two. Then $\dim_f(G)=\frac{|V(G)|}{2}$ if and only if there exists a bijection $\alpha: V(G) \rightarrow V(G)$ such that $\alpha(v) \neq v$ and $|R\{v, \alpha(v)\}|=2$ for all $v \in V(G)$.
\end{theorem}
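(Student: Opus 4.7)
My plan is to prove the two directions separately, using linear programming duality for the harder direction.

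For the sufficiency direction, suppose such a bijection $\alpha$ exists. Since $v, \alpha(v) \in R\{v, \alpha(v)\}$ always, the condition $|R\{v, \alpha(v)\}|=2$ forces $R\{v, \alpha(v)\} = \{v, \alpha(v)\}$. Hence any resolving function $g$ must satisfy $g(v) + g(\alpha(v)) \ge 1$ for every $v$. Summing over all $v \in V(G)$ and using the bijectivity of $\alpha$ (so each vertex appears exactly twice in the sum), I obtain $2\,g(V(G)) \ge |V(G)|$, which gives $g(V(G)) \ge |V(G)|/2$. Combined with the standing upper bound $\dim_f(G) \le |V(G)|/2$, this yields equality.

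For the necessity direction, I would formulate $\dim_f(G)$ as the LP of minimizing $\sum_v g(v)$ subject to $g(R\{x,y\}) \ge 1$ for all pairs $x \neq y$ and $g \ge 0$ (the upper bound $g \le 1$ may be dropped because $g \equiv 1/2$ is feasible, so averaging any optimal $g$ with $\mathbf{1}/2$ produces an optimal solution bounded strictly below $1$, making the upper bounds non-binding by complementary slackness). The dual LP maximizes $\sum_{\{x,y\}} z_{\{x,y\}}$ subject to $\sum_{\{x,y\}:\, v \in R\{x,y\}} z_{\{x,y\}} \le 1$ for each $v$. Strong duality gives an optimal $z^*$ of total weight $|V(G)|/2$, and summing the dual constraints over all vertices yields
\[
|V(G)| \;\ge\; \sum_{\{x,y\}} |R\{x,y\}| \cdot z^*_{\{x,y\}} \;\ge\; 2\sum_{\{x,y\}} z^*_{\{x,y\}} \;=\; |V(G)|.
\]
Equality throughout forces $|R\{x,y\}|=2$ for every pair $\{x,y\}$ in the support of $z^*$, and also forces $\sum_{\{x,y\}:\, v \in R\{x,y\}} z^*_{\{x,y\}} = 1$ at every vertex $v$.

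Let $H$ be the graph on $V(G)$ whose edges are the pairs $\{u,w\}$ with $|R\{u,w\}|=2$. The previous step shows that $z^*$ is a fractional perfect matching of $H$. Choosing $z^*$ to be a vertex of the fractional matching polytope, its entries lie in $\{0, 1/2, 1\}$ and the support partitions $V(G)$ into weight-$1$ matching edges and odd cycles of weight-$1/2$ edges. I then define $\alpha$ componentwise: on a matching edge $\{u,w\}$ set $\alpha(u)=w, \alpha(w)=u$; on an odd cycle $v_1 v_2 \cdots v_{2m+1} v_1$ set $\alpha(v_i)=v_{i+1 \pmod{2m+1}}$. By construction $\alpha$ is a fixed-point-free bijection and each $\{v,\alpha(v)\}$ lies in $E(H)$, so $|R\{v,\alpha(v)\}|=2$ for every $v$. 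The main obstacle is invoking the polyhedral structure of the fractional matching polytope (Balinski's theorem); passing from a fractional perfect matching to the explicit disjoint-union decomposition is the crucial nontrivial step, and it is also what allows odd-order graphs (where $H$ admits no perfect matching) to be handled via permutations containing odd cycles rather than only involutions.
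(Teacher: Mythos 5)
Your proposal cannot be compared against an in-paper argument, because the paper does not prove this theorem at all: it is recalled as a known result, with the statement attributed to the Arumugam--Mathew paper and a correct proof attributed to the reference \cite{frac_kang}. Judged on its own merits, your proof is correct. The sufficiency direction is the standard double-counting argument: since $v,\alpha(v)\in R\{v,\alpha(v)\}$ always, the hypothesis forces $R\{v,\alpha(v)\}=\{v,\alpha(v)\}$, and summing $g(v)+g(\alpha(v))\ge 1$ over the permutation gives $g(V(G))\ge\frac{|V(G)|}{2}$, matching the universal upper bound. The necessity direction is a clean, self-contained route through linear programming: strong duality produces an optimal dual solution $z^*$ of total weight $\frac{|V(G)|}{2}$; summing the vertex constraints and using $|R\{x,y\}|\ge 2$ pins the support of $z^*$ to pairs with $|R\{x,y\}|=2$ and makes every vertex constraint tight, so $z^*$ is a fractional perfect matching of the auxiliary graph $H$; the half-integrality of extreme points of the fractional perfect matching polytope (Balinski) then yields the partition into matching edges and odd cycles from which $\alpha$ is read off. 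Two remarks. First, your justification for discarding the constraints $g\le 1$ (averaging with $\mathbf{1}/2$, then complementary slackness) does work under the necessity hypothesis $\dim_f(G)=\frac{|V(G)|}{2}$, but a simpler and hypothesis-free argument is truncation: if $g\ge 0$ satisfies all covering constraints, then so does $\min(g,\mathbf{1})$, since any constraint set containing a vertex of $g$-value at least $1$ is automatically satisfied; hence the bounded and unbounded LPs always have equal optima. Second, your closing observation is exactly the crux that a careless proof misses: the odd cycles in the extreme-point decomposition are why the theorem is stated for bijections rather than involutions, and they are genuinely needed --- e.g.\ for $K_3$ one has $\dim_f(K_3)=\frac{3}{2}$, the graph $H$ is $K_3$ itself with no perfect matching, and $\alpha$ must be a $3$-cycle.
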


An explicit characterization of graphs $G$ satisfying $\dim_f(G)=\frac{|V(G)|}{2}$ is given in~\cite{fracdim2}. We recall the following construction from~\cite{fracdim2}. Let $\mathcal{K}=\{K_n: n \ge 2\}$ and $\overline{\mathcal{K}}=\{\overline{K}_n: n \ge 2\}$. Let $H[\mathcal{K} \cup \overline{\mathcal{K}}]$ be the family of graphs obtained from a connected graph $H$ by (i) replacing each vertex $u_i \in V(H)$ by a graph $H_i \in \mathcal{K} \cup \overline{\mathcal{K}}$, and (ii) each vertex in $H_i$ is adjacent to each vertex in $H_j$ if and only if $u_iu_j \in E(H)$.

\begin{theorem}~\emph{\cite{fracdim2}}\label{n_2}
Let $G$ be a connected graph of order at least two. Then $\dim_f(G)=\frac{|V(G)|}{2}$ if and only if $G \in H[\mathcal{K} \cup \overline{\mathcal{K}}]$ for some connected graph $H$.
\end{theorem}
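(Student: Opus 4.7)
The plan is to reduce Theorem~\ref{n_2} to the preceding theorem (the bijection criterion) and to translate the combinatorial condition into a structural description. First I would note that for two distinct vertices $u,v$, the set $R\{u,v\}$ always contains $u$ and $v$, so $|R\{u,v\}|=2$ is equivalent to $R\{u,v\}=\{u,v\}$, which in turn is equivalent to $u$ and $v$ being \emph{twin vertices} in the sense defined in the Preliminaries ($N(u)-\{v\}=N(v)-\{u\}$). By the preceding theorem, $\dim_f(G)=\frac{|V(G)|}{2}$ is then equivalent to the existence of a fixed-point-free bijection $\alpha:V(G)\to V(G)$ such that each $v$ and $\alpha(v)$ are twins; in particular, every vertex of $G$ has at least one twin.

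Next I would study the \emph{twin relation} $\sim$, declared by $u\sim v$ iff $u=v$ or $u,v$ are twins, and argue that its transitive closure still satisfies the twin condition on each class, with a uniform \emph{type}. Concretely, if $u,v$ are adjacent twins (``true twins'') then they have identical closed neighborhoods and form part of a clique; if they are non-adjacent twins (``false twins'') then $N(u)=N(v)$. A short case check shows one cannot have $u,v$ true twins and $v,w$ false twins simultaneously (the resulting identities force $u=w$), so each equivalence class $T_i$ is either a clique (all true twins) or an independent set (all false twins) of size $\ge 2$. A second routine check shows inter-class adjacency is uniform: if $v\in T_i$ is a twin of $v'\in T_i$ and $v$ is adjacent to some $w\in T_j$ (with $j\ne i$), then, because $v$ and $v'$ share neighbors outside $\{v,v'\}$, also $v'\sim w$; the same argument on the $T_j$ side shows every vertex of $T_j$ is adjacent to every vertex of $T_i$.

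For the ``only if'' direction I would now form the quotient graph $H$ with vertex set $\{u_1,\ldots,u_m\}$ corresponding to the twin classes $T_1,\ldots,T_m$, and declare $u_iu_j\in E(H)$ exactly when some (equivalently every) pair from $T_i\times T_j$ is adjacent in $G$. Replacing each $u_i$ by $K_{|T_i|}$ or $\overline{K}_{|T_i|}$ according to the type of $T_i$ recovers $G$, so $G\in H[\mathcal{K}\cup\overline{\mathcal{K}}]$; connectedness of $H$ follows immediately from the connectedness of $G$. For the ``if'' direction, given $G\in H[\mathcal{K}\cup\overline{\mathcal{K}}]$, the vertices inside each block $H_i$ are twins in $G$ and $|V(H_i)|\ge 2$, so any derangement of $V(H_i)$ extends to a fixed-point-free bijection $\alpha$ of $V(G)$ with $|R\{v,\alpha(v)\}|=2$ for every $v$, whence the preceding theorem gives $\dim_f(G)=\frac{|V(G)|}{2}$.

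The main obstacle I expect is the structural analysis in paragraph two, namely verifying that the twin relation really does partition $V(G)$ into pure ``all-true'' or ``all-false'' classes and that inter-class adjacency is all-or-nothing; once this is in place, the construction of $H$ and the converse are formal. It is worth noting that the small-case argument showing true and false twinhood cannot coexist in a single class is the delicate step, since one must exploit the distinction between $N(v)-\{u\}$ and $N(v)$ carefully in the definition of twins.
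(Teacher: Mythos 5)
Your proposal is correct, but there is a wrinkle in the comparison: the paper itself contains no proof of Theorem~\ref{n_2} --- it is quoted from~\cite{fracdim2}, just as the bijection criterion preceding it is quoted from~\cite{fracdim1, frac_kang}. So your argument can only be measured against the original reference, and it follows what is essentially the standard route taken there: observe that $R\{u,v\}\supseteq\{u,v\}$ always, so $|R\{u,v\}|=2$ is equivalent to $u$ and $v$ being twins, whence the bijection criterion says exactly that every vertex has a twin; show the twin relation is an equivalence relation whose classes are pure (all adjacent ``true'' twins, forming cliques, or all non-adjacent ``false'' twins, forming independent sets) with all-or-nothing adjacency between distinct classes; then pass to the quotient graph $H$ and, conversely, build a blockwise derangement. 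Your handling of the one delicate point is sound: if $u,v$ are true twins and $v,w$ are false twins with $u,v,w$ distinct, then $u\in N(v)=N(w)$ forces $uw\in E(G)$, while $w\notin N[v]=N[u]$ forces $uw\notin E(G)$, a contradiction; hence types cannot mix, the relation is transitive, and the classes are pure. Two cosmetic points, neither affecting correctness: in the uniform-adjacency step you write ``$v'\sim w$'' where you mean ``$v'$ is adjacent to $w$'' (twins across distinct classes are impossible precisely because the classes are equivalence classes, so $\sim$ is the wrong symbol there); and in the ``if'' direction it is worth saying explicitly that a derangement of each block exists because every member of $\mathcal{K}\cup\overline{\mathcal{K}}$ has order at least two, a cyclic shift sufficing.
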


Now, we recall the fractional metric dimension of some classes of graphs. We begin by recalling some terminologies. Fix a graph $G$. A leaf $u$ is called a \emph{terminal vertex} of a major vertex $v$ if $d(u,v) < d(u,w)$ for every other major vertex $w$. The terminal degree, $ter_G(v)$, of a major vertex $v$ is the number of terminal vertices of $v$. A major vertex $v$ is an \emph{exterior major vertex} if it has positive terminal degree. Let $ex(G)$ denote the number of exterior major vertices of $G$, $ex_a(G)$ the number of exterior major vertices $u$ with $ter_G(u)=a$, and $\sigma(G)$ the number of leaves of $G$.

\begin{theorem}\label{thm_frac}
\setlength\itemsep{0em}
\item[(a)] \emph{\cite{fracsdim}} For any graph $G$ of order $n \ge 2$, $\dim_f(G)=1$ if and only if $G \cong P_n$.
\item[(b)] \emph{\cite{fracdim_yi}} For a tree $T$, $\dim_f(T)=\frac{1}{2}(\sigma(T)-ex_1(T))$.
\item[(c)] \emph{\cite{fracdim1}} For the Petersen graph $\mathcal{P}$, $\dim_f(\mathcal{P})=\frac{5}{3}$.
\item[(d)] \emph{\cite{fracdim1}} For an $n$-cycle $C_n$,
$\dim_f(C_n)=\left\{
\begin{array}{ll}
\frac{n}{n-2} & \mbox{if $n$ is even},\\
\frac{n}{n-1} & \mbox{if $n$ is odd}.
\end{array}\right.$
\item[(e)] \emph{\cite{fracdim1}} For the wheel graph $W_n$ of order $n \ge 5$,
$\dim_f(W_n)=\left\{
\begin{array}{ll}
2 & \mbox{if } n=5,\\
\frac{3}{2} & \mbox{if } n=6,\\
\frac{n-1}{4} & \mbox{if } n \ge 7.
\end{array}\right.$
\item[(f)] \emph{\cite{fracsdim}} If $B_m$ is a bouquet of $m$ cycles with a cut-vertex (i.e., the vertex sum of $m$ cycles at one common vertex), where $m\ge 2$, then $\dim_f(B_m)=m$.
\item[(g)] \emph{\cite{fracdim_yi}} For $m \ge 2$, let $G=K_{a_1,a_2, \ldots, a_m}$ be a complete $m$-partite graph of order $n=\sum_{i=1}^{m}a_i$, and let $s$ be the number of partite sets of $G$ consisting of exactly one element. Then
\begin{equation*}
\dim_f(G)=\left\{
\begin{array}{ll}
\frac{n-1}{2} & \mbox{if }s=1,\\
\frac{n}{2} & \mbox{otherwise}.
\end{array}\right.
\end{equation*}
\item[(h)]  \emph{\cite{fracdim1}} For the grid graph $G=P_s \square P_t$ ($s,t \ge 2$), $\dim_f(G)=2$.
\end{theorem}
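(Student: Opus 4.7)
The plan is to verify each of the eight items separately, since the graph classes involved are structurally different. A common template will apply: I will produce a lower bound on $\dim_f(G)$ by identifying a pair $\{x,y\}$ with small $|R\{x,y\}|$, which forces any resolving $g$ to sum to at least $1$ on that set; then I will exhibit an explicit $g:V(G)\to[0,1]$ meeting that bound and verify that it resolves every pair. For the highly symmetric graphs in parts (c)--(e) and (g)--(h), I will invoke the standard averaging reduction: if $g^\ast$ is optimal, then averaging $g^\ast$ over $\mathrm{Aut}(G)$ produces an optimal $g$ that is constant on each orbit, collapsing the LP to a small number of inequalities indexed by orbit representatives.

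For (a), assigning weight $1$ to one leaf of $P_n$ gives $\dim_f(P_n)\le 1$, and $\dim_f(G)\ge 1$ always; the converse direction uses Observation~\ref{obs2}(c) together with the classical fact that $\dim(G)=1$ iff $G=P_n$. Parts (c), (d), and (f) follow from the averaging reduction: the extremal orbits of pairs have $R$-sets of computable size, and the uniform weight $c=1/|R\{x,y\}|$ on the relevant orbit meets the lower bound exactly (in (f), the $m$ cycles decouple away from the cut-vertex, giving a contribution of $1$ per cycle). For (e), the small wheels $n\in\{5,6\}$ require direct case analysis because of extra orbit collapsings, while for $n\ge 7$ the rim vertices carry equal weight $1/4$. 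For (g), any two vertices in the same part are twins, so $g$ must sum to at least $1$ on each part of size $\ge 2$; uniform allocation within each part achieves this, with an exceptional singleton part handled by noting it is already resolved. For (h), I would place weight $1/2$ at each of the four corners of $P_s\square P_t$ and verify both the lower bound (from opposite-corner pairs) and feasibility.

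The hardest piece is (b), the tree formula, because trees lack a transitive automorphism group and the averaging reduction fails. My approach: on each exterior major vertex $v$ with terminal degree $\mathrm{ter}_T(v)\ge 2$, distribute weight $1/2$ to each terminal leaf of $v$, and leave $g=0$ elsewhere. The constraints $g(R\{u_1,u_2\})\ge 1$ for two leaves $u_1,u_2$ sharing the same exterior major vertex are tight, since $R\{u_1,u_2\}$ is exactly the union of the two terminal legs; meanwhile all other pairs $\{x,y\}$ have $R\{x,y\}$ spanning enough terminal legs to collect at least $1$. Summing yields $\tfrac{1}{2}(\sigma(T)-ex_1(T))$, and LP duality, pairing each terminal leaf with the constraint that saturates it, establishes optimality.

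The principal obstacle throughout is the case analysis required to confirm that the candidate $g$ satisfies $g(R\{x,y\})\ge 1$ for \emph{every} pair, since $R\{x,y\}$ depends delicately on the relative positions of $x$ and $y$; this is routine but voluminous, most notably in (b) and (h). Since each part is already in the cited literature with a full proof, I would appeal to those references for the remaining bookkeeping rather than reprove the items from scratch.
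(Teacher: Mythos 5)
The first thing to say is that the paper does not prove Theorem~\ref{thm_frac} at all: it is a survey-style recollection of known results, stated with citations to \cite{fracsdim}, \cite{fracdim_yi} and \cite{fracdim1} and no arguments. So your closing move---deferring to the cited literature for the full proofs---coincides exactly with the paper's own treatment, and most of your sketches (automorphism-averaging to an orbit-constant optimal function for the Petersen graph, cycles and wheels; the twin-vertex forcing for complete multipartite graphs; weight $\frac{1}{2}$ on each terminal leaf of the exterior major vertices of terminal degree at least two, with the pairwise leg constraints summed to get the matching lower bound, for trees) are consistent in spirit with how those sources and the later sections of this paper (Propositions~\ref{fkdim_tree}--\ref{grid_fkdim} at $k=1$) argue.

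There is, however, one concrete step in your outline that would fail if executed: the converse direction of (a). Observation~\ref{obs2}(c) says $\dim_f(G) \le \dim(G)$, so the hypothesis $\dim_f(G)=1$ places no upper bound on $\dim(G)$; in particular it does not give $\dim(G)=1$, and the classical characterization of graphs with metric dimension one cannot be invoked this way. The entire content of (a) is the claim that every connected non-path graph has $\dim_f(G)>1$, and that needs its own argument (this is precisely what the cited reference supplies). A second, smaller soft spot is the lower bound in (h): the $R$-sets of the two diagonally opposite near-corner pairs, $R\{(u_1,w_2),(u_2,w_1)\}$ and $R\{(u_s,w_{t-1}),(u_{s-1},w_t)\}$, are not disjoint (they share the corners $(u_s,w_1)$ and $(u_1,w_t)$), so adding just those two constraints yields only $\dim_f(G) \ge 2 - g((u_s,w_1)) - g((u_1,w_t))$. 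One needs all four near-corner pairs---each boundary vertex then lies in exactly two of the four $R$-sets, so summing the four constraints gives $2\,g(V(G)) \ge 4$---or a different argument, to reach the bound $2$.
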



\section{Some general results on fractional $k$-metric dimension}

In this section, we show that $\dim_f^k(G) \ge k \dim_f(G)$ for any $k \in [1, \kappa(G)]$. We also describe a condition for which 
$\dim_f^k(G)=k\dim_f(G)$ holds for all $k \in [1, \kappa(G)]$. For all $k \in [1,\kappa(G)]$, we show that $k \le \dim_f^k(G) \le \frac{k}{\kappa(G)}|V(G)|$, which implies $k \le \dim_f^k(G) \le |V(G)|$; we characterize graphs $G$ satisfying $\dim_f^k(G)=k$ and $\dim_f^k(G)=|V(G)|$, respectively. We conclude with an example such that two non-isomorphic graphs $H_1$ and $H_2$ satisfy $\dim^k_f(H_1)=\dim_f^k(H_2)$ for all $k \in [1, \kappa]$, where $\kappa(H_1)=\kappa(H_2)=\kappa$.

We begin by comparing the fractional metric dimension and the fractional $k$-metric dimension of graphs.

\begin{lemma}\label{fdim_fkdim}
For any connected graph $G$ and for any $k \in [1,\kappa(G)]$, $\dim_f^k(G) \ge k \dim_f(G)$.
\end{lemma}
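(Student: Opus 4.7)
The plan is to exploit a simple scaling argument: divide an optimal $k$-resolving function by $k$ to obtain an ordinary resolving function. Concretely, I would start by letting $g: V(G) \to [0,1]$ be a $k$-resolving function of $G$ realizing $\dim_f^k(G) = g(V(G))$ (such a minimizer exists by the standard compactness/LP argument, or one can phrase the inequality in terms of infima over $k$-resolving functions).

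Next, I would define $h: V(G) \to \mathbb{R}$ by $h(v) = g(v)/k$. Since $k \in [1, \kappa(G)]$ implies $k \ge 1$, and $g(v) \in [0,1]$, we have $h(v) \in [0, 1/k] \subseteq [0,1]$, so the codomain condition for a resolving function is satisfied. For any two distinct $x, y \in V(G)$, the $k$-resolving property gives
\[
h(R\{x,y\}) \;=\; \frac{1}{k}\, g(R\{x,y\}) \;\ge\; \frac{k}{k} \;=\; 1,
\]
so $h$ is a resolving function of $G$. Therefore
\[
\dim_f(G) \;\le\; h(V(G)) \;=\; \frac{g(V(G))}{k} \;=\; \frac{\dim_f^k(G)}{k},
\]
which rearranges to $\dim_f^k(G) \ge k \dim_f(G)$.

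There is no real obstacle here; the only subtlety is to verify that the rescaled function $g/k$ still lands in $[0,1]$, which is exactly where the hypothesis $k \ge 1$ is used (if one allowed $k < 1$, the rescaling could push values above $1$ and the argument would break). If one prefers to work with infima rather than assuming a minimizer exists, the same rescaling shows that the infimum over resolving functions is at most $(1/k)$ times the infimum over $k$-resolving functions, giving the same conclusion without any appeal to attainment.
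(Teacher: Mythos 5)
Your proposal is correct and follows essentially the same argument as the paper: take a minimum $k$-resolving function $g$, rescale to $h = \frac{1}{k}g$, verify $h$ is a resolving function, and conclude $\dim_f(G) \le \frac{1}{k}\dim_f^k(G)$. Your explicit check that $h$ lands in $[0,1]$ (using $k \ge 1$) is a detail the paper leaves implicit, but the proof is the same.
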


\begin{proof}
Let $g: V(G) \rightarrow [0,1]$ be a minimum $k$-resolving function of $G$. Then $g(R\{x,y\}) \ge k$ for any two distinct vertices $x,y \in V(G)$. Now, let $h: V(G) \rightarrow [0,1]$ be a function defined by $h(u)=\frac{1}{k}g(u)$ for each $u \in V(G)$. Then $h(R\{x,y\})=\frac{1}{k} g(R\{x,y\})\ge 1$ for any distinct vertices $x,y \in V(G)$; thus, $h$ is a resolving function of $G$. So, $h(V(G))=\frac{1}{k} \dim^k_f(G) \ge \dim_f(G)$, i.e., $\dim^k_f(G) \ge k \dim_f(G)$.~\hfill
\end{proof}

Next, we examine the conditions for which $\dim_f^k(G)=k \dim_f(G)$ holds.

\begin{lemma}\label{fdim_fkdim2}
Let $G$ be a connected graph and let $k \in[1, \kappa(G)]$. If there exists a minimum resolving function $g: V(G) \rightarrow [0,1]$ such that $g(v) \le \frac{1}{k}$ for each $v \in V(G)$, then $\dim_f^k(G)=k\dim_f(G)$ for any $k \in [1, \kappa(G)]$.
\end{lemma}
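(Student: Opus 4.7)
The plan is to establish the equality by sandwiching: combine the universal lower bound from Lemma~\ref{fdim_fkdim} with a matching upper bound produced by scaling the hypothesized minimum resolving function. The lower bound $\dim_f^k(G) \ge k\dim_f(G)$ is already given, so the entire task is to exhibit a $k$-resolving function whose total weight equals $k\dim_f(G)$.

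First, I would take the minimum resolving function $g:V(G)\to[0,1]$ guaranteed by the hypothesis, with $g(v)\le \tfrac{1}{k}$ for every $v\in V(G)$, and define $h:V(G)\to \mathbb{R}$ by $h(v)=k\,g(v)$. The key verification is twofold: that $h$ lands in $[0,1]$, which follows directly from $g(v)\le \tfrac{1}{k}$ together with $g(v)\ge 0$; and that $h$ is a $k$-resolving function, which follows from the linearity of the sum in the definition of $g(U)$ together with the fact that $g$ is a resolving function, giving $h(R\{x,y\})=k\,g(R\{x,y\})\ge k\cdot 1 = k$ for every pair of distinct vertices $x,y\in V(G)$.

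Having shown that $h$ is a $k$-resolving function of $G$, we obtain
\[
\dim_f^k(G)\le h(V(G))=k\,g(V(G))=k\,\dim_f(G),
\]
using that $g$ was chosen to be a minimum resolving function. Coupled with $\dim_f^k(G)\ge k\dim_f(G)$ from Lemma~\ref{fdim_fkdim}, this yields the desired equality.

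There is essentially no obstacle; the proof is a clean rescaling argument and the content of the lemma lies entirely in the hypothesis that some minimum resolving function is pointwise bounded by $\tfrac{1}{k}$, which is precisely what makes the rescaling $kg$ stay within the codomain $[0,1]$. The only subtlety worth flagging is that the hypothesis refers to \emph{some} minimum resolving function, not all of them, so one should be explicit about fixing such a $g$ at the outset of the proof rather than attempting to modify an arbitrary one.
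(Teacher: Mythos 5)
Your proof is correct and is essentially identical to the paper's own argument: both scale the hypothesized minimum resolving function $g$ by $k$ to obtain a $k$-resolving function $h=kg$, use $g(v)\le\frac{1}{k}$ to keep $h$ within $[0,1]$, and combine the resulting upper bound $\dim_f^k(G)\le k\dim_f(G)$ with the lower bound of Lemma~\ref{fdim_fkdim}.
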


\begin{proof}
Let $g: V(G) \rightarrow [0,1]$ be a minimum resolving function of $G$ satisfying $g(v) \le \frac{1}{k}$ for each $v \in V(G)$. Let $h: V(G) \rightarrow [0,1]$ be a function defined by $h(v)=kg(v)$ for each $v \in V(G)$. Then $h$ is a $k$-resolving function of $G$: (i) for each $v \in V(G)$, $0 \le h(v)=kg(v) \le 1$; (ii) for any two distinct $x,y \in V(G)$, $h(R\{x,y\})=kg(R\{x,y\}) \ge k$, since $g(R\{x,y\}) \ge 1$ by the assumption that $g$ is a resolving function of $G$. So, $\dim_f^k(G) \le h(V(G))=kg(V(G))=k\dim_f(G)$. Since $\dim_f^k(G) \ge k \dim_f(G)$ by Lemma~\ref{fdim_fkdim}, $\dim_f^k(G)=k\dim_f(G)$.~\hfill
\end{proof}

Next, we obtain the lower and upper bounds of $\dim_f^k(G)$ in terms of $k$, $\kappa(G)$, and the order of $G$.

\begin{proposition}\label{bounds1}
Let $G$ be a connected graph of order $n$. For any $k \in [1, \kappa(G)]$, $\displaystyle k \le \dim_f^k(G) \le \frac{kn}{\kappa(G)},$ where both bounds are sharp.
\end{proposition}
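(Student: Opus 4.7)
The plan is to establish each inequality by a direct argument and then point to examples already noted in the preceding text to show sharpness.

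For the lower bound $\dim_f^k(G)\ge k$, I would start with any $k$-resolving function $g:V(G)\to[0,1]$. Fix any two distinct vertices $x,y\in V(G)$ (which exist since $G$ is connected and $\kappa(G)\ge 1$ forces $|V(G)|\ge 2$). Because $g$ takes nonnegative values and $R\{x,y\}\subseteq V(G)$, one has
\[
g(V(G))=\sum_{v\in V(G)}g(v)\ \ge\ \sum_{v\in R\{x,y\}}g(v)\ =\ g(R\{x,y\})\ \ge\ k,
\]
where the last inequality is the defining property of a $k$-resolving function. Taking the infimum over all $k$-resolving functions yields $\dim_f^k(G)\ge k$.

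For the upper bound, the idea is simply to exhibit the uniform $k$-resolving function. Define $g:V(G)\to[0,1]$ by $g(v)=k/\kappa(G)$ for every $v\in V(G)$; this is well-defined with values in $[0,1]$ because $k\in[1,\kappa(G)]$ forces $0\le k/\kappa(G)\le 1$. For any two distinct vertices $x,y\in V(G)$, the definition of $\kappa(G)$ gives $|R\{x,y\}|\ge \kappa(G)$, hence
\[
g(R\{x,y\})\ =\ \frac{k}{\kappa(G)}\,|R\{x,y\}|\ \ge\ \frac{k}{\kappa(G)}\cdot \kappa(G)\ =\ k,
\]
so $g$ is a $k$-resolving function. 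Therefore $\dim_f^k(G)\le g(V(G))=kn/\kappa(G)$.

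For sharpness of the lower bound, I would invoke Theorem~\ref{thm_frac}(a): for $G=P_n$ with $n\ge 2$ and $k=1$, we have $\dim_f^1(P_n)=\dim_f(P_n)=1=k$. For sharpness of the upper bound, I would use the complete graph $K_n$ with $n\ge 3$: since any two distinct vertices are twins in $K_n$, one has $\kappa(K_n)=2$, and by the computation noted in the earlier remark (to be cited via Proposition~\ref{fkdim_multipartite}), $\dim_f^k(K_n)=kn/2=kn/\kappa(K_n)$ for all $k\in[1,2]$. Neither step presents a genuine obstacle; the only subtlety is confirming that $k/\kappa(G)\le 1$ in the construction of the uniform $k$-resolving function, which is exactly the hypothesis $k\le\kappa(G)$.
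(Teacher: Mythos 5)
Your proof is correct and follows essentially the same route as the paper: the lower bound via the defining inequality $g(V(G))\ge g(R\{x,y\})\ge k$, and the upper bound via the uniform function $g\equiv k/\kappa(G)$, whose admissibility rests exactly on $k\le\kappa(G)$ and $|R\{x,y\}|\ge\kappa(G)$. The only difference is cosmetic: for sharpness the paper points to Proposition~\ref{characterization}(a) (paths) and Proposition~\ref{fkdim_cycle} (odd cycles), whereas you use $P_n$ with $k=1$ via Theorem~\ref{thm_frac}(a) and $K_n$ via Proposition~\ref{fkdim_multipartite}; both choices are valid and non-circular.
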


\begin{proof}
The lower bound is trivial. For the upper bound, let $g:V(G)\rightarrow [0,1]$ be a function such that $g(v)=\frac{k}{\kappa(G)}$ for each $v\in V(G)$. Since $\kappa(G)=\min\{|R\{x,y\}|:  x\neq y \mbox{ and } x,y \in V(G)\}$, $g(R\{u,v\}) \ge k$ for any distinct vertices $u,v\in V(G)$. So, $g$ is a $k$-resolving function of $G$, and hence $\dim_f^k(G) \le g(V(G))= \sum_{i=1}^n \frac{k}{\kappa(G)}= \frac{kn}{\kappa(G)}$.

For the sharpness of the lower bound, see Proposition~\ref{characterization}(a); for the sharpness of the upper bound, we refer to Proposition~\ref{fkdim_cycle}.~\hfill
\end{proof}

As an immediate consequence of Proposition~\ref{bounds1}, we have the following.

\begin{corollary}\label{cor_bounds1}
For a connected graph $G$ of order $n$ and for $k \in [1, \kappa(G)]$, $\displaystyle k \le \dim_f^k(G) \le n$.
\end{corollary}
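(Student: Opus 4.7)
The plan is to deduce this corollary directly from Proposition~\ref{bounds1}, which already gives the sharper bounds $k \le \dim_f^k(G) \le \frac{kn}{\kappa(G)}$. The lower bound $k \le \dim_f^k(G)$ is already present verbatim in the previous proposition, so nothing needs to be done for it; the only task is to weaken the upper bound $\frac{kn}{\kappa(G)}$ to $n$.

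For the upper bound, I would observe that the hypothesis $k \in [1,\kappa(G)]$ gives $k \le \kappa(G)$, so that $\frac{k}{\kappa(G)} \le 1$, and consequently $\frac{kn}{\kappa(G)} \le n$. Chaining this with Proposition~\ref{bounds1} yields $\dim_f^k(G) \le \frac{kn}{\kappa(G)} \le n$, which together with the lower bound finishes the corollary.

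Since the argument is a one-line rescaling of an already-proved inequality, there is no real obstacle. The only subtlety worth flagging is implicit: one must use that $\kappa(G) \ge 1$ (which is automatic for any connected graph of order at least two, since $R\{x,y\}$ always contains $x$ and $y$ themselves when $x \ne y$) so that dividing by $\kappa(G)$ is legitimate, and one must use that the admissible range of $k$ is bounded above by $\kappa(G)$ rather than something larger. Both are baked into the standing assumptions, so the proof amounts essentially to a single sentence citing Proposition~\ref{bounds1}.
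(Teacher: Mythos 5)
Your proof is correct and matches the paper exactly: the paper presents this corollary as an immediate consequence of Proposition~\ref{bounds1}, relying on precisely the observation you spell out, namely that $k \le \kappa(G)$ gives $\frac{kn}{\kappa(G)} \le n$. Nothing further is needed.
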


Next, we characterize graphs $G$ achieving the lower bound and the upper bound, respectively, of Corollary~\ref{cor_bounds1}. Let $\mathcal{R}_{\kappa}(G)= \displaystyle\bigcup_{x,y \in V(G), x \neq y, |R\{x,y\}|=\kappa} R\{x,y\}$, where $\kappa=\kappa(G)$.

\begin{proposition}\label{characterization}
For a connected graph $G$ of order $n \ge 2$ and for $k \in [1, \kappa(G)]$,
\begin{itemize}\setlength\itemsep{0em}
\item[(a)] $\dim_f^k(G)=k$ if and only if $G \cong P_n$ and $k \in [1,2]$,
\item[(b)] $\dim_f^k(G)=n$ if and only if $k=\kappa(G)=\kappa$ and $V(G)=\mathcal{R}_{\kappa}(G)$.
\end{itemize}
\end{proposition}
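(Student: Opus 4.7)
The plan is to prove parts (a) and (b) independently, in each case combining an explicit construction for sufficiency with a rigidity argument that exploits the codomain restriction $g(v) \in [0,1]$ to pin down the values of a minimum $k$-resolving function.

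For (a) sufficiency, take $G \cong P_n$ with vertices $u_1,\dots,u_n$ and $k \in [1,2]$, and define $g(u_1)=g(u_n)=k/2$ with $g(u_i)=0$ otherwise. Because distances to the endpoints separate every pair of vertices of a path, both $u_1$ and $u_n$ lie in $R\{x,y\}$ for every pair of distinct vertices; hence $g$ is a $k$-resolving function of total weight $k$, which matches the lower bound in Proposition~\ref{bounds1}. For (a) necessity, suppose $\dim_f^k(G)=k$. Lemma~\ref{fdim_fkdim} yields $k \ge k\dim_f(G)$, so $\dim_f(G)=1$ and $G\cong P_n$ by Theorem~\ref{thm_frac}(a). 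To extract $k\le 2$, let $g$ be a minimum $k$-resolving function; for each interior index $2\le i\le n-1$ the pair $\{u_{i-1},u_{i+1}\}$ has $R\{u_{i-1},u_{i+1}\}=V(P_n)\setminus\{u_i\}$, so the inequality $k - g(u_i) \ge k$ forces $g(u_i)=0$. Consequently $k=g(V(P_n))=g(u_1)+g(u_n)\le 2$; the case $n=2$ is automatic since $\kappa(P_2)=2$.

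For (b) sufficiency, assume $k=\kappa=\kappa(G)$ and $V(G)=\mathcal{R}_\kappa(G)$, and let $g$ be any $k$-resolving function. For every pair $\{x,y\}$ with $|R\{x,y\}|=\kappa$ the constraint $g(R\{x,y\}) \ge \kappa = |R\{x,y\}|$ combined with $g(v)\le 1$ forces $g \equiv 1$ on $R\{x,y\}$; since every vertex lies in some such $R\{x,y\}$ by hypothesis, $g\equiv 1$ on $V(G)$ and $\dim_f^k(G)=n$. For (b) necessity argue by contrapositive. If $k<\kappa$, perturb the constant function $\mathbf{1}$ by choosing any vertex $v_0$ and resetting $g(v_0)=1-(\kappa-k)$; then $g(R\{x,y\}) \ge |R\{x,y\}| - (\kappa-k) \ge k$, so $g$ is $k$-resolving of weight $n-(\kappa-k)<n$. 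If $k=\kappa$ but some $v \notin \mathcal{R}_\kappa(G)$, then every pair $\{x,y\}$ with $v\in R\{x,y\}$ satisfies $|R\{x,y\}| \ge \kappa+1$, so the function defined by $g(v)=0$ and $g(u)=1$ for $u\neq v$ is $k$-resolving of weight $n-1$.

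The step I expect to be most delicate is the latter half of (a): deducing $k\le 2$ from the equality $\dim_f^k(P_n)=k$. It relies on the combinatorial fact that every interior vertex of a path is equidistant from its two neighbours on the path, which is precisely what propagates the tight global constraint $g(V(P_n))=k$ into a pointwise vanishing on all interior vertices. For part (b), the only subtlety is that the perturbation used in the contrapositive must remain a $k$-resolving function; this follows cleanly because the uniform lower bound $|R\{x,y\}|\ge \kappa$ absorbs the small deficit introduced.
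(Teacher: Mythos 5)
Your proposal is correct in overall structure, and in part (a) it is genuinely more self-contained than the paper's proof. The paper disposes of both directions of (a) by citing Proposition~\ref{fkdim_path} (the closed formula for $\dim_f^k(P_n)$, proved later in Section~4): sufficiency is read off from the formula, and necessity follows because $2+(k-2)\frac{n-2}{n-3}>k$ for $k>2$. You instead prove sufficiency with the explicit endpoint function $g(u_1)=g(u_n)=k/2$ (valid since both endpoints lie in every $R\{x,y\}$ of a path), and you extract $k\le 2$ from the identity $R\{u_{i-1},u_{i+1}\}=V(P_n)\setminus\{u_i\}$, which forces any minimum $k$-resolving function to vanish on all interior vertices, leaving $k=g(u_1)+g(u_n)\le 2$. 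This is a real trade-off: the paper's argument is shorter but depends on the harder computation of $\dim_f^k(P_n)$ on $(2,n-1]$, while yours needs only the trivial lower bound of Proposition~\ref{bounds1}. Part (b) sufficiency coincides with the paper's rigidity argument, and your treatment of the case $k=\kappa$ with $\mathcal{R}_\kappa(G)\subsetneq V(G)$ is the same function the paper uses (indeed stated slightly more carefully, since you verify the resolving condition for every pair whose $R$-set contains the deficient vertex, not only pairs containing it).

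One step fails as written: in the contrapositive of (b), when $k<\kappa$ you set $g(v_0)=1-(\kappa-k)$, which is negative whenever $\kappa-k>1$ (for example $G=C_6$ has $\kappa=4$; with $k=1$ you would need $g(v_0)=-2$), so $g$ is not a function into $[0,1]$ and hence not an admissible $k$-resolving function. The repair is immediate: take $g(v_0)=\max\{0,\,1-(\kappa-k)\}$, and note that in the capped case $g(R\{x,y\})\ge \kappa-1\ge k$, so $g$ is still $k$-resolving with weight at most $n-\min\{1,\kappa-k\}<n$. Alternatively, do what the paper does and skip the perturbation entirely: Proposition~\ref{bounds1} gives $\dim_f^k(G)\le \frac{kn}{\kappa}<n$ whenever $k<\kappa$, so $\dim_f^k(G)=n$ already forces $k=\kappa$, and the perturbation argument is only needed for the $\mathcal{R}_\kappa(G)\subsetneq V(G)$ case, where your function is valid.
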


\begin{proof}
\textbf{(a)} ($\Leftarrow$) If $G \cong P_n$ and $k \in [1,2]$, then $\dim_f^k(G)=k$ by Proposition~\ref{fkdim_path}.

($\Rightarrow$) Suppose that $\dim_f^k(G)=k$. Since $\dim_f^k(G) \ge k \dim_f(G) \ge k$ by Lemma~\ref{fdim_fkdim}, $\dim_f^k(G)=k$ implies $\dim_f(G)=1$; thus $G \cong P_n$ by Theorem~\ref{thm_frac}(a) and $k \in [1,2]$ from Proposition~\ref{fkdim_path}.

\vspace{.2cc}

\textbf{(b)} ($\Leftarrow$) Let $k=\kappa(G)=\kappa$ and $V(G)=\mathcal{R}_{\kappa}(G)$. Then, for any vertex $v\in V(G)$, there exist two
distinct vertices $x,y\in V(G)$ such that $v\in R\{x,y\}$ with $|R\{x,y\}|=\kappa$. Since any $\kappa$-resolving function $g$ of $G$ must satisfy $g(R\{x,y\}) \ge \kappa$ and $g(v) \le 1$ for each $v \in V(G)$, $g(u)=1$ for each $u \in R\{x,y\}$ with $|R\{x,y\}|=\kappa$. Since $V(G)=\mathcal{R}_{\kappa}(G)$, $g(v)=1$ for each $v \in V(G)$. Thus, $g(V(G))=n$ and hence $\dim_f^k(G)=n$.

  ($\Rightarrow$) Let $\dim_f^k(G)=n$. By Proposition~\ref{bounds1}, $k=\kappa$. Suppose that $\mathcal{R}_{\kappa}(G)\subsetneq V(G)$ and let $w\in V(G)-\mathcal{R}_{\kappa}(G)$. Then, for any vertex $w' \in V(G)-\{w\}$, $|R\{w, w'\}| \ge \kappa(G)+1$. If $h: V(G) \rightarrow [0,1]$ is a function defined by $h(w)=0$ and $h(v)=1$ for each $v \in V(G)-\{w\}$, then $h$ is a $\kappa$-resolving function of $G$ with $h(V(G))=n-1$, which contradicts the assumption that $\dim_f^k(G)=n$.~\hfill
\end{proof}

\begin{remark}
Let $G \in H[\mathcal{K} \cup \overline{\mathcal{K}}]$ for some connected graph $H$, as described in Theorem~\ref{n_2}. Then $\kappa(G)=2$ and $V(G)=\mathcal{R}_{\kappa}(G)$; thus $\dim_f^2(G)=|V(G)|$ by Proposition~\ref{characterization}(b). More generally, $\dim_f^k(G)=k\dim_f(G)=\frac{k}{2}|V(G)|$ for $k \in [1,2]$ by Lemma~\ref{fdim_fkdim2}, since $g:V(G) \rightarrow [0,1]$ defined by $g(u)=\frac{1}{2}$, for each $u \in V(G)$, forms a minimum resolving function of $G$.
\end{remark}

We conclude this section with an example showing that two non-isomorphic graphs can have the same $\kappa$ and identical $k$-fractional metric dimension for all $k \in[1,\kappa]$.

\begin{remark}
There exist non-isomorphic graphs $H_1$ and $H_2$ such that $\dim_f^k(H_1)=\dim_f^k(H_2)$ for all $k \in [1, \kappa]$, where $\kappa=\kappa(H_1)=\kappa(H_2)$. For example, let $H_1 \cong K_{2,2,3}$ and $H_2 \cong K_{3,4}$; then $\dim_f(H_1)=\dim_f(H_2)=\frac{7}{2}$ by Theorem~\ref{thm_frac}(g). Since both $H_1$ and $H_2$ have twin vertices, $\kappa(H_1)=\kappa(H_2)=2$. Also note that a function $g_i: V(H_i) \rightarrow [0,1]$ defined by $g_i(u)=\frac{1}{2}$ for each $u \in V(H_i)$ forms a minimum resolving function for $H_i$, where $i \in \{1,2\}$. By Lemma~\ref{fdim_fkdim2}, $\dim_f^k(H_1)=\dim_f^k(H_2)=\frac{7}{2}k$ for every $k \in [1, \kappa]$, whereas $H_1 \not\cong H_2$.
\end{remark}


\section{The fractional $k$-metric dimension of some graphs}

In this section, we determine $\dim_f^k(G)$ for $k \in [1, \kappa(G)]$ when $G$ is a tree, a cycle, a wheel graph, the Petersen graph, a bouquet of cycles, a complete multi-partite graph, or a grid graph (the Cartesian product of two paths). Along the way, we provide an example showing that $\dim_f^k(G)-k\dim_f(G)$ can be arbitrarily large for some $k \in (1, \kappa(G)]$. First, we determine $\dim_f^k(G)$ when $G$ is a path.

\begin{proposition}\label{fkdim_path}
Let $P_n$ be an $n$-path, where $n \ge 2$. Then $\dim_f^k(P_2)=k$ for $k \in [1,2]$ and, for $n \ge 3$,
\begin{equation*}
\dim_f^k(P_n)=\left\{
\begin{array}{ll}
k & \mbox{ if } k \in [1,2],\\
2+(k-2)\frac{n-2}{n-3} & \mbox{ if } k \in (2,n-1].
\end{array}\right.
\end{equation*}
\end{proposition}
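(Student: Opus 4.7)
My plan is to reduce the problem to a small LP by exploiting the transparent structure of $R$-sets on a path. Label $V(P_n)=\{u_1,\ldots,u_n\}$ in path order. A one-line distance calculation shows that, for $1\le i<j\le n$, a vertex $u_\ell$ fails to lie in $R\{u_i,u_j\}$ if and only if $\ell=(i+j)/2$ (which forces $i+j$ to be even). Three consequences will drive the proof: (i) the endpoints $u_1,u_n$ lie in \emph{every} $R\{u_i,u_j\}$; (ii) for each interior index $m\in\{2,\ldots,n-1\}$, the pair $(u_{m-1},u_{m+1})$ realizes $R\{u_{m-1},u_{m+1}\}=V(P_n)\setminus\{u_m\}$; and (iii) $\kappa(P_2)=2$ while $\kappa(P_n)=n-1$ for $n\ge 3$.

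For $k\in[1,2]$, the lower bound $\dim_f^k(P_n)\ge k$ is immediate from Proposition~\ref{bounds1}. For the matching upper bound, I would define $g(u_1)=g(u_n)=k/2$ and $g(u_m)=0$ for every interior $m$; since $k\le 2$, $g$ is $[0,1]$-valued, and (i) gives $g(R\{u_i,u_j\})\ge g(u_1)+g(u_n)=k$ for every pair, so $\dim_f^k(P_n)\le k$.

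For $n\ge 4$ and $k\in(2,n-1]$, both bounds will follow from the same short computation. Let $g$ be any $k$-resolving function and set $M=g(V(P_n))$. Applying the $k$-resolving inequality to the pair in (ii) yields $g(u_m)\le M-k$ for each $m\in\{2,\ldots,n-1\}$, and combined with $g(u_1),g(u_n)\le 1$ we get
\[
M \;=\; g(u_1)+g(u_n)+\sum_{m=2}^{n-1}g(u_m)\;\le\; 2+(n-2)(M-k),
\]
which rearranges to $M\ge 2+(k-2)\tfrac{n-2}{n-3}$. A matching upper bound is then exhibited by $g(u_1)=g(u_n)=1$ and $g(u_m)=\tfrac{k-2}{n-3}$ on interior vertices: the hypothesis $k\le n-1$ makes $g$ admissible, the critical constraints from (ii) are tight by construction since $M-g(u_m)=2+(k-2)-0=k$, and for pairs with $i+j$ odd one checks $g(R\{u_i,u_j\})=M\ge k$ directly from $M-k=(k-2)/(n-3)\ge 0$.

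There is no real obstacle here; the argument is essentially a two-line LP duality in disguise. The main things to watch are purely bookkeeping: the case $n=3$ must be vacuous (the interval $(2,n-1]$ is empty, so the $n-3$ in the denominator never causes trouble), and admissibility of the extremal function $g(u_m)=(k-2)/(n-3)$ is precisely the content of $k\le n-1=\kappa(P_n)$, so both bounds hold on the entire stated range.
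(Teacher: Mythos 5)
Your proof is correct and takes essentially the same route as the paper's: the same structural facts about $R$-sets on a path (the missing vertex $u_{(i+j)/2}$, endpoints lying in every $R$-set, $\kappa(P_n)=n-1$), the same lower-bound computation for $k\in(2,n-1]$ (your per-vertex bounds $g(u_m)\le M-k$ summed with $g(u_1)+g(u_n)\le 2$ is just a reorganization of the paper's summation of the constraints $h(V(P_n))-h(u_{i+1})\ge k$), and the identical extremal function $g(u_1)=g(u_n)=1$, $g(u_m)=\frac{k-2}{n-3}$. The only cosmetic difference is that for $k\in[1,2]$ you exhibit the $k$-resolving function $g(u_1)=g(u_n)=k/2$ directly and invoke the trivial lower bound, whereas the paper scales a minimum resolving function via Lemma~\ref{fdim_fkdim2}; both are valid.
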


\begin{proof}
Let $P_n$ be an $n$-path given by $u_1, u_2, \ldots, u_n$, where $n \ge 2$; then $\kappa(P_2)=2$ and $\kappa(P_n)=n-1$ for $n \ge 3$. Since a function $h$ defined on $V(P_2)$ by $h(u_1)=h(u_2)=\frac{1}{2}$ is a minimum resolving function of $P_2$, $\dim_f^k(P_2)=k\dim_f(P_2)=k$ for $k \in [1,2]$ by Theorem~\ref{thm_frac}(a) and Lemma~\ref{fdim_fkdim2}. So, let $n \ge 3$ and we consider two cases.

\textbf{Case 1: $k \in [1,2]$.} If $g: V(P_n) \rightarrow [0,1]$ is a function defined by $g(u_1)=g(u_n)=\frac{1}{2}$ and $g(u_i)=0$ for each $i \in \{2,\ldots, n-1\}$, then $g$ is a minimum resolving function of $P_n$: (i) for any two distinct vertices $x,y \in V(P_n)$, $R\{x,y\} \supseteq \{u_1, u_n\}$, and hence $g(R\{x,y\}) \ge g(u_1)+g(u_n)=\frac{1}{2}+\frac{1}{2}=1$; (ii) $g(V(P_n))=1=\dim_f(P_n)$ by Theorem~\ref{thm_frac}(a). Since $g(u_i) \le \frac{1}{2} \le \frac{1}{k}$ for each $i \in \{1,2,\ldots, n\}$, we have $\dim_f^k(P_n)=k \dim_f(P_n)=k$ for any $k \in [1,2]$ by Lemma~\ref{fdim_fkdim2} and Theorem~\ref{thm_frac}(a).

\textbf{Case 2: $k \in (2,n-1]$.} Note that $n \ge 4$ in this case since $\kappa(P_3)=2$. Let $h: V(P_n) \rightarrow [0,1]$ be a $k$-resolving function of $P_n$. Let $h(u_1)+h(u_n)=b$; then $0 \le b \le 2$. Since $R\{u_i, u_{i+2}\}=V(P_n)-\{u_{i+1}\}$ for $i \in \{1,2,\ldots, n-2\}$, $h(R\{u_i, u_{i+2}\})=h(V(P_n))-h(u_{i+1}) \ge k$ for each $i \in \{1,2,\ldots, n-2\}$. By summing over the $(n-2)$ inequalities, we have $(n-2)h(V(P_n))-\sum_{j=2}^{n-1}h(u_j) \ge (n-2)k$, i.e., $(n-3)h(V(P_n))+h(u_1)+h(u_n) \ge (n-2)k$. So, $h(V(P_n)) \ge \frac{(n-2)k-b}{n-3}$ since $n>3$; note that the minimum of $h(V(P_n))$ is $\frac{(n-2)k-2}{n-3}$ when $b=h(u_1)+h(u_n)$ takes the maximum value 2. Thus, $\dim_f^k(P_n) \ge \frac{(n-2)k-2}{n-3}=2+(k-2)\frac{n-2}{n-3}$.

Now, let $g: V(P_n) \rightarrow [0,1]$ be a function defined by
\begin{equation*}
g(u_i)=\left\{
\begin{array}{ll}
1 & \mbox{ if } i \in \{1,n\},\\
\frac{k-2}{n-3} & \mbox{ otherwise}.
\end{array}\right.
\end{equation*}
Then $g$ is a $k$-resolving function of $P_n$: (i) $0<\frac{k-2}{n-3} \le 1$ for any $k \in (2, n-1]$; (ii) for any distinct $i,j \in \{1,2,\ldots,n\}$, $g(R\{u_i, u_j\}) \ge 2+(n-3)\frac{k-2}{n-3}=k$ since $R\{u_i, u_j\} \supseteq \{u_1, u_n\}$ and $|R\{u_i, u_j\}|\ge n-1$. So, $\dim_f^k(P_n) \le g(V(P_n))=2+(n-2)\frac{k-2}{n-3}$.

Therefore, $\dim_f^k(P_n)=2+(k-2)\frac{n-2}{n-3}$ for $k \in (2,n-1]$.~\hfill
\end{proof}

Second, we determine $\dim_f^k(T)$ when $T$ is a tree that is not a path. Let $M(T)$ be the set of exterior major vertices of a tree $T$. Let $M_1(T)=\{w \in M(T): ter_T(w)=1\}$, $M_2(T)=\{w \in M(T): ter_T(w)=2\}$, and $M_3(T)=\{w \in M(T): ter_T(w) \ge 3\}$; note that $M(T)=M_1(T) \cup M_2(T) \cup M_3(T)$. For any vertex $v\in M(T)$, let $T_v$ be the subtree of $T$ induced by $v$ and all vertices belonging to the paths joining $v$ with its terminal vertices. Let $M^*(T)=M_2(T) \cup M_3(T)$. We recall the following result on $\kappa(T)$.

\begin{theorem}\emph{\cite{kdim1}}\label{kappa_tree} 
For a tree $T$ that is not a path, 
$$\kappa(T)=\min \displaystyle\bigcup_{w \in M^*(T)} \{d(\ell_i, \ell_j): \ell_i \mbox{ and } \ell_j \mbox{ are two distinct terminal vertices of }w\}.$$
\end{theorem}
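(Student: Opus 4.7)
Let $\mu$ denote the right-hand side of the claimed identity; I will establish $\kappa(T) \le \mu$ and $\kappa(T) \ge \mu$ separately. For the upper bound, fix $w \in \mathcal{M}(T)$ and two distinct terminal vertices $\ell_i, \ell_j$ of $w$ realizing $d(\ell_i, \ell_j) = \mu$, ordered so that $d(w, \ell_i) \le d(w, \ell_j)$. Set $x = \ell_i$ and let $y$ be the vertex on the $w-\ell_j$ path at distance $d(w, \ell_i)$ from $w$. Then the $x-y$ geodesic passes through $w$, which is its midpoint. A direct distance computation shows that a vertex $z$ distinguishes $x$ from $y$ exactly when $z$ lies on one of the two terminal paths $w-\ell_i$ or $w-\ell_j$ with $z \ne w$; every other vertex reaches both $x$ and $y$ only through $w$ and is therefore equidistant from them. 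Counting gives $|R\{x, y\}| = d(w, \ell_i) + d(w, \ell_j) = \mu$, hence $\kappa(T) \le \mu$.

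\textbf{Lower bound.} For any distinct $x, y \in V(T)$, I bound $|R\{x, y\}| \ge \mu$ by cases. If $d(x, y)$ is odd then no vertex is a midpoint of the $x-y$ path, so $R\{x, y\} = V(T)$ and $|V(T)| \ge \mathrm{diam}(T) + 1 > \mu$. If $d(x, y)$ is even with midpoint $m$, a standard projection argument (a vertex $z$ off the $x-y$ path distinguishes $x$ from $y$ iff its unique projection onto the path is not $m$) yields $|R\{x, y\}| = |A_x| + |A_y|$, where $A_x, A_y$ are the components of $T \setminus \{m\}$ containing $x$ and $y$ respectively. If $\deg_T(m) \le 2$, this equals $|V(T)| - 1 \ge \mu$. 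Otherwise $m$ is a major vertex and two sub-cases arise. \emph{(i)} Both $A_x$ and $A_y$ are pendant paths of $m$ ending at terminal vertices $\ell_x^*, \ell_y^*$ of $m$; then $m \in \mathcal{M}(T)$ and $|A_x| + |A_y| = d(m, \ell_x^*) + d(m, \ell_y^*) = d(\ell_x^*, \ell_y^*) \ge \mu$. \emph{(ii)} Otherwise, WLOG $A_y$ contains a major vertex of $T$; among the major vertices of $T$ lying in $A_y$, choose $w''$ at maximum distance from $m$. By the maximality of $d(m,w'')$, every branch of $w''$ inside $A_y$ pointing away from $m$ is a pendant path to a leaf of $T$; since $\deg_T(w'') \ge 3$ with exactly one edge at $w''$ pointing toward $m$, at least two such pendant paths exist, so $w'' \in \mathcal{M}(T)$ and its two closest terminal vertices $\ell_1, \ell_2$ lie inside $A_y$. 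This gives $|A_y| \ge d(w'', \ell_1) + d(w'', \ell_2) = d(\ell_1,\ell_2) \ge \mu$, whence $|A_x| + |A_y| \ge \mu$.

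\textbf{Main obstacle.} The technically delicate point is sub-case \emph{(ii)} of the lower bound: one must verify that the maximally-distant major vertex $w''$ in $A_y$ automatically belongs to $\mathcal{M}(T)$ and that both of its two closest terminal vertices lie inside $A_y$. Both facts hinge on the maximality choice—any further major vertex of $T$ on a branch at $w''$ pointing away from $m$ would be strictly farther from $m$, contradicting the selection, so those branches must be pendant paths terminating at leaves contained in $A_y$. Making this fully rigorous requires careful bookkeeping of the local branch structure at $w''$, in particular the verification that exactly one of its $\deg_T(w'')$ incident edges lies on the path from $w''$ toward $m$ while the remaining $\ge 2$ edges each begin a pendant terminal path lying entirely in $A_y$.
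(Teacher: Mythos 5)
Your proof is correct, but there is nothing in the paper to compare it against: the paper states Theorem~\ref{kappa_tree} as a quoted result, citing \cite{kdim1} (Estrada-Moreno, Rodr\'{\i}guez-Vel\'{a}zquez and Yero), and supplies no proof of its own. Judged on its own merits, your argument is a valid, self-contained derivation. The upper bound is right: taking $x=\ell_i$ and $y$ the point on the $w-\ell_j$ path symmetric to $\ell_i$ about $w$, the set $R\{x,y\}$ is exactly the union of the two terminal paths minus $w$, of size $d(\ell_i,\ell_j)$. The lower bound is also sound: the projection (median) fact for trees gives $R\{x,y\}=V(T)$ when $d(x,y)$ is odd and $R\{x,y\}=A_x\cup A_y$ when $d(x,y)$ is even with midpoint $m$; the case $\deg_T(m)\le 2$ is handled by $|V(T)|-1\ge \mathrm{diam}(T)\ge\mu$; sub-case \emph{(i)} correctly places $m$ in $\mathcal{M}(T)$ (a pendant-path component forces its leaf to be a terminal vertex of $m$); and in sub-case \emph{(ii)} the maximality of $d(m,w'')$ does force every branch of $w''$ pointing away from $m$ to be major-vertex-free, hence a pendant path inside $A_y$, so that $ter_T(w'')\ge\deg_T(w'')-1\ge 2$, $w''\in\mathcal{M}(T)$, and $|A_y|\ge d(\ell_1,\ell_2)\ge\mu$. (One cosmetic remark: you do not need the ``two closest'' terminal vertices of $w''$ -- any two of its terminal vertices inside $A_y$ give $d(\ell_1,\ell_2)\ge\mu$ since $\mu$ is a minimum over all such pairs.) The dichotomy underlying your case split -- a component of $T-m$ either contains a major vertex or is a pendant path ending at a terminal vertex -- is exactly the kind of midpoint-and-branch reasoning the paper itself deploys elsewhere for trees (e.g., Lemma~\ref{lemma_tree} and the identities $R\{s_i,s_j\}=V(P^i)\cup V(P^j)$ in Proposition~\ref{fkdim_tree}), so your proof also fits naturally with the paper's toolkit.
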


We begin by examining $\dim_f^k(T)$ when $T$ is a tree with exactly one exterior major vertex. 

\begin{proposition}\label{fkdim_tree} 
Let $T$ be a tree with $ex(T)=1$. Let $v$ be the exterior major vertex of $T$ and let $\ell_1, \ell_2, \ldots, \ell_a$ be the terminal vertices of $v$ in $T$ (note that $a \ge 3$). Suppose that $d(v, \ell_1) \le d(v, \ell_2) \le \ldots \le d(v, \ell_a)$. Then $\kappa(T)=d(\ell_1, \ell_2)$, and
\begin{itemize}\setlength\itemsep{0em}
\item[(a)] if $d(v, \ell_1)=d(v, \ell_2)$, then $\dim_f^k(T)=k\dim_f(T)=\frac{ka}{2}$ for $k \in [1, \kappa(T)]$;
\item[(b)] if $d(v, \ell_1) < d(v, \ell_2)$, then
\begin{equation*}
\dim_f^k(T)=\left\{
\begin{array}{ll}
\frac{ka}{2} & \mbox{ for } k\in [1, 2d(v, \ell_1)],\\
(a-1)k-(a-2)d(v, \ell_1) & \mbox{ for } k \in (2d(v, \ell_1), \kappa(T)].
\end{array}\right.
\end{equation*}
\end{itemize}
\end{proposition}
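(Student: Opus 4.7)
Since $v$ is the sole exterior major vertex and has terminal degree $a \ge 3$, the tree $T$ is a ``spider'': every vertex either equals $v$ or lies on exactly one of the $a$ legs connecting $v$ to a leaf. Writing $L_i$ for the set of vertices of leg $i$ other than $v$, so that $|L_i| = s_i := d(v,\ell_i)$, the plan hinges on a structural fact I would establish first: whenever $x \in L_i$ and $y \in L_j$ with $i \ne j$ and $d(v,x) = d(v,y)$, one has $R\{x,y\} = L_i \cup L_j$. Indeed, any vertex on a third leg or at $v$ is at common distance $d(v,x)+d(v,z)$ from both $x$ and $y$, while every vertex on $L_i$ or $L_j$ is strictly closer to one of the endpoints than to the other. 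A routine case analysis then shows that every other pair $(x,y)$ yields $R\{x,y\}$ equal to $V(T)$ or to $V(T) \setminus \{z\}$ for a single midpoint $z$. With this in hand, Theorem~\ref{kappa_tree} immediately gives $\kappa(T) = \min_{i \ne j}(s_i + s_j) = s_1 + s_2$, which reduces to $2d(v,\ell_1)$ precisely in case (a).

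For the lower bounds, the structural fact forces any $k$-resolving function $g$ to satisfy $g(L_i) + g(L_j) \ge k$ for every pair $i \ne j$. Summing these $\binom{a}{2}$ inequalities yields $(a-1)\sum_i g(L_i) \ge \frac{a(a-1)}{2}k$, so $\dim_f^k(T) \ge \frac{ak}{2}$ in both cases. In case (b) with $k > 2s_1$ this is not tight; I would then combine the leg inequality with the pointwise bound $g(L_1) \le |L_1| = s_1$, obtaining $g(L_j) \ge k - s_1$ for every $j \ge 2$, and hence
\[
g(V(T)) \;\ge\; g(L_1) + (a-1)(k - g(L_1)) \;\ge\; (a-1)k - (a-2)s_1,
\]
which overtakes $\tfrac{ak}{2}$ exactly when $k > 2s_1$.

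For the matching upper bound in case (a), and in case (b) with $k \in [1, 2s_1]$, I would exhibit the function $g(v) = 0$, $g(u) = \frac{1}{2s_i}$ for $u \in L_i$; the structural fact makes $g$ a resolving function (since $g(L_i) + g(L_j) = 1$, while every other $R$-set contains all but at most one vertex), it attains $g(V(T)) = \frac{a}{2} = \dim_f(T)$ via Theorem~\ref{thm_frac}(b) (as $ex_1(T) = 0$), and its maximum value $\frac{1}{2s_1}$ is at most $\frac{1}{k}$, so Lemma~\ref{fdim_fkdim2} delivers $\dim_f^k(T) = k\dim_f(T) = \frac{ka}{2}$. For case (b) with $k \in (2s_1, \kappa(T)]$, I would attain the lower bound with a $k$-resolving function $h$ defined by $h(v) = 0$, $h(u) = 1$ for $u \in L_1$, and $h(u) = \frac{k - s_1}{s_j}$ for $u \in L_j$ when $j \ge 2$; the inequality $k \le s_1 + s_2 \le s_1 + s_j$ keeps $h \le 1$, and $h(V(T)) = s_1 + (a-1)(k - s_1) = (a-1)k - (a-2)s_1$. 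The main obstacle, where the bookkeeping demands the most care, will be verifying that $h$ satisfies the $k$-resolving condition on \emph{every} pair and not only the tight cross-leg equal-distance pairs; however, since the remaining $R$-sets are either $V(T)$ or miss a single vertex $z$ with $h(z) \le 1$, the required inequalities reduce, using $a \ge 3$ and $k > 2s_1$, to the elementary checks $(a-2)(k - s_1) \ge 0$ and $(a-2)(k - s_1) \ge 1$.
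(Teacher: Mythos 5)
Your proposal is correct and follows essentially the same route as the paper: the same two weight functions (your case-(a)/(b) small-$k$ function is, after scaling by $k$, exactly the paper's $g(u)=\frac{k}{2d(v,\ell_i)}$ on leg $i$, and your large-$k$ function $h$ is identical to the paper's), and the same large-$k$ lower-bound argument $g(V(T))\ge \beta+(a-1)(k-\beta)\ge (a-1)k-(a-2)d(v,\ell_1)$ with $\beta=g(L_1)\le s_1$. The only organizational differences are that you isolate the spider $R$-set structure as an explicit preliminary fact, obtain the $\frac{ak}{2}$ lower bound by summing the $\binom{a}{2}$ leg inequalities rather than citing Lemma~\ref{fdim_fkdim} together with Theorem~\ref{thm_frac}(b), and route the small-$k$ upper bound through Lemma~\ref{fdim_fkdim2} instead of verifying the scaled function directly; none of these changes the substance of the argument.
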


\begin{proof}
For each $i \in \{1,2,\ldots, a\}$, let $s_i$ be the neighbor of $v$ lying on the $v-\ell_i$ path, and let $P^i$ denote the $s_i-\ell_i$ path in $T$. By Theorem~\ref{kappa_tree}, $\kappa(T)=d(\ell_1, \ell_2)$. Let $k \in [1, \kappa(T)]$.

\textbf{(a)} Let $d(v, \ell_1)\!=\!d(v, \ell_2)$. By Lemma~\ref{fdim_fkdim}, $\dim_f^k(T)\!\ge\! k\dim_f(T)$. We will show that $\dim_f^k(T)\!\le\! k\dim_f(T)$. Let $g: V(T) \rightarrow [0,1]$ be a function defined by
\begin{equation}\label{tree_ex1}
g(u)=\left\{
\begin{array}{ll}
0 & \mbox{ if } u=v,\\
\frac{k}{2d(v, \ell_i)} & \mbox{ for each vertex } u \in V(P^i), \mbox{ where } i \in \{1,2,\ldots, a\}.
\end{array}\right.
\end{equation}
Note that, for each $i \in \{1,2,\ldots, a\}$, (i) $g(V(P^i))=\frac{k}{2}$; (ii) $0 \le \frac{k}{2d(v, \ell_i)} \le 1$ since $k \le \kappa(T)=
d(\ell_1, \ell_2)=2d(v, \ell_1) \le 2d(v, \ell_i)$. If two distinct vertices $x$ and $y$ lie on the $v-\ell_i$ path for some $i \in \{1,2,\ldots, a\}$, then $g(R\{x,y\}) \ge g(V(T)-V(P^i)) \ge k$ since $a \ge 3$. For distinct $i,j \in \{1,2,\ldots, a\}$, if $x \in V(P^i)$ and $y \in V(P^j)$ with $d(v,x) \neq d(v,y)$, say $d(v,x)<d(v,y)$ without loss of generality, then $g(R\{x,y\}) \ge g(V(T)-V(P^j)) \ge k$; if $x \in V(P^i)$ and $y \in V(P^j)$ with $d(v,x)=d(v,y)$, then $g(R\{x,y\}) =g(V(P^i) \cup V(P^j))=k$. So, $g$ is a $k$-resolving function of $T$ with $g(V(T))=\frac{ka}{2}$. Thus $\dim^k_f(T) \le \frac{ka}{2}=k \dim_f(T)$ by Theorem~\ref{thm_frac}(b). Therefore, $\dim_f^k(T) = k\dim_f(T)=\frac{ka}{2}$ for $k \in [1, \kappa(T)]$.

\vspace{.2cc}

\textbf{(b)} Let $d(v, \ell_1)<d(v, \ell_2)$, and we consider two cases.

\textbf{Case 1: $k \in [1, 2d(v, \ell_1)]$.} In this case, the function $g$ in (\ref{tree_ex1}) is a $k$-resolving function of $T$ as shown in the proof for (a); thus, $\dim_f^k(T)\le g(V(T))=\frac{ka}{2}=k\dim_f(T)$. Since $\dim_f^k(T) \ge k\dim_f(T)$ by Lemma~\ref{fdim_fkdim}, $\dim_f^k(T)=k\dim_f(T)=\frac{ka}{2}$.

\textbf{Case 2: $k \in (2d(v, \ell_1), \kappa(T)]$.} Let $h: V(T) \rightarrow [0,1]$ be a minimum $k$-resolving function of $T$. Note that (i) $h(V(P^1)) \le d(v, \ell_1)$ since $h(u) \le 1$ for each $u\in V(T)$; (ii) for distinct $i,j \in \{1,2,\ldots,a\}$, $h(V(P^i))+h(V(P^j)) \ge k$ since $R\{s_i,s_j\}=V(P^i)\cup V(P^j)$. Let $h(V(P^1))=\beta$. From $h(V(P^1))+h(V(P^j)) \ge k$ for each $j \in \{2,3,\ldots, a\}$, $h(V(P^j))\ge k-\beta$. So, $h(V(T)) \ge h(V(P^1)) + \sum_{i=2}^{a} h (V(P^i)) \ge \beta+(a-1)(k-\beta)
=(a-1)k-(a-2)\beta \ge (a-1)k-(a-2)d(v, \ell_1)$ since $\beta \le d(v, \ell_1)$; thus $\dim_f^k(T) \ge (a-1)k-(a-2)d(v, \ell_1)$.

Next, we show that $\dim_f^k(T) \le (a-1)k-(a-2)d(v, \ell_1)$. Let $g: V(T) \rightarrow [0,1]$ be a function defined by 
\begin{equation*}
g(u)=\left\{
\begin{array}{ll}
0 & \mbox{ if } u=v,\\
1 & \mbox{ for each vertex } u \in V(P^1),\\
\frac{k-d(v, \ell_1)}{d(v, \ell_j)} & \mbox{ for each vertex } u \in V(P^j), \mbox{ where } j \in \{2,\ldots, a\}.
\end{array}\right.
\end{equation*}
Note that (i) $g(V(P^1))=d(v, \ell_1)$; (ii) for $j \in \{2,3,\ldots, a\}$, $g(V(P^j))=k-d(v, \ell_1)>k-\frac{k}{2}=\frac{k}{2}$ since $k > 2d(v, \ell_1)$; (iii) $g(V(T))=d(v, \ell_1)+(a-1)(k-d(v, \ell_1))=(a-1)k-(a-2)d(v, \ell_1)$. Also note that $g$ is a $k$-resolving function of $T$: (i) $0 \le \frac{d(v, \ell_1)}{d(v, \ell_j)} \le \frac{k-d(v, \ell_1)}{d(v, \ell_j)} \le \frac{d(\ell_1, \ell_2)-d(v, \ell_1)}{d(v, \ell_j)}=\frac{d(v, \ell_2)}{d(v, \ell_j)}\le 1$ for $j \in \{2,\ldots, a\}$; (ii) if two distinct vertices $x$ and $y$ lie on the $v-\ell_i$ path for some $i \in \{1,2,\ldots, a\}$, then $g(R\{x,y\}) \ge g(V(T))-g(V(P^i)) \ge \min\{k, 2(k-d(v, \ell_1))\} = k$ since $a \ge 3$ and $k-d(v, \ell_1) > \frac{k}{2}$; (iii) if $x \in V(P^i)$ and $y \in V(P^j)$ with $d(v,x) \neq d(v,y)$ for distinct $i,j\in \{1,2,\ldots,a\}$, then $g(R\{x,y\}) \ge \min\{g(V(T)-V(P^i)), g(V(T)-V(P^j))\} \ge k$, since at most one vertex in the $\ell_i-\ell_j$ path can be at equal distance from both $x$ and $y$ in $T$; (iv) if $x \in V(P^i)$ and $y \in V(P^j)$ with $d(v,x) = d(v,y)$ for 
distinct $i,j\in \{1,2,\ldots,a\}$, then $g(R\{x,y\})=g(V(P^i))+g(V(P^j)) \ge \min\{k, 2(k-d(v, \ell_1))\} =k$. Thus, $\dim_f^k(T) \le g(V(T))=(a-1)k-(a-2)d(v, \ell_1)$.

Therefore, $\dim_f^k(T)=(a-1)k-(a-2)d(v, \ell_1)$.~\hfill
\end{proof}

Next, we determine $\dim_f^k(T)$ for a tree $T$ with $ex(T) \ge 1$. We begin with the following lemma, which, besides being useful for Theorem~\ref{tree_general}, bears independent interest.

\begin{lemma}\label{lemma_tree}
Let $T$ be a tree with $ex(T) \ge 2$. For $w\in M^*(T)$, let $x\in V(T_w)$ and $y\in V(T)-V(T_w)$. Then either $R\{x,y\} \supseteq V(T_w)$ or $R\{x,y\} \supseteq V(T_{w'})$ for some $w'\in M^*(T)-\{w\}$. 
\end{lemma}

\begin{proof}
Since $ex(T) \ge 2$, $|M^*(T)| \ge 2$. Let $x\in V(T_w)$ for some $w\in M^*(T)$. First, suppose $y\in V(T_{w'})$ for some $w'\in M^*(T)-\{w\}$. Assume, for contradiction, that there exist $u\in V(T_w)$ and $v\in V(T_{w'})$ such that 
\begin{equation}\label{lem_eq1}
d(x,u)=d(y,u) \mbox{ and } d(x,v)=d(y,v).
\end{equation} 
Put $a=d(y,v), b=d(v,w'), c=d(w',w), d=d(w,u), e=d(u,x)$. Let us call a path leading from $w$ to any of its leaves a ``$w$-terminal path". We may assume that $u$ and $x$ lie in the same $w$-terminal path, since $d(x,u)=d(y,u)$ implies $d(x,w)=d(y,w)$ if $u$ and $x$ lie in distinct $w$-terminal paths. Likewise, we assume $v$ and $y$ lie in the same $w'$-terminal path. After writing the two equations (\ref{lem_eq1}) in terms of components $a,b,c,d,e$ and simplifying, we obtain $b+c+d=0$. This means that $b=c=d=0$, since all variables denote (nonnegative) distances. In particular, the distinctness of $w$ and $w'$ is contradicted by $c=0$. 

Now, suppose $y\notin V(T_{z})$ for any $z\in M^*(T)$; then either $y \in V(T_{y'})$ for some $y' \in M_1(T)$ or $y \not\in V(T_{z'})$ for any $z' \in M(T)$. Note that there exists a vertex $w' \in M^*(T)-\{w\}$ such that either $y$ or $y'$ lies in the $w-w'$ path in $T$. Since $d(s,x)=d(s,y)$ for $s\in V(T_{w'})$ is equivalent to $d(w',x)=d(w',y)$, we may assume, for contradiction, that 
\begin{equation}\label{lem_eq2}
d(x,w')=d(y,w') \mbox{ and } d(x,t)=d(y,t), \mbox{ where }t\in V(T_w).
\end{equation} 
Put $d(y,w')=a+b$ and $d(w,w')=c+b$, where $b\geq 0$ denotes the length of the path shared between the $w-w'$ path and the $y-w'$ path. Similarly, put $d(w,t)=d$ and $d(x,t)=e$. As before, we may assume that $x$ and $t$ lie in the same $w$-terminal path. After simplifying the two equations (\ref{lem_eq2}) in terms of $a,b,c,d,e$, we obtain $c=d=0$. This implies that either $y\in V(T_w)$ (when $b>0$) or $w=w'$ (when $b=0$); both possibilities contradict the present assumptions.~\hfill 
\end{proof}

\begin{theorem}\label{tree_general}
Let $T$ be a tree with $ex(T) \ge 1$. Then $\kappa(T)=\min\{\kappa(T_v):v\in M^*(T)\}$ and, for $k \in [1,\kappa(T)]$,
\begin{equation}{\label{tree_main-v2}}
\dim_f^k(T)=k|M_2(T)|+\displaystyle\sum_{v\in M_3(T)}\dim_f^k(T_v).
\end{equation}
\end{theorem}

\begin{proof}
If $ex(T)=1$, then $|M_2(T)|=0$ and $|M_3(T)|=1$, and so (\ref{tree_main-v2}) trivially holds; see Propostion~\ref{fkdim_tree} for explicit formulas for $\kappa(T)$ and $\dim_f^k(T)$. So, let $ex(T) \ge 2$; then $|M^*(T)| \ge 2$. By Theorem~\ref{kappa_tree}, $\kappa(T)=\displaystyle\min\{\kappa(T_v):v\in M^*(T)\}$. Let $k \in [1, \kappa(T)]$; notice that $\kappa(T) \le \kappa(T_z)$ for any $z\in M^*(T)$. 

First, we show that $\dim _f^k(T) \ge k|M_2(T)|+\sum_{v\in M_3(T)}\dim_f^k(T_v)$. For $v \in M_2(T)$, let $N(v) \cap V(T_{v})=\{r_1, r_2\}$. For $w \in M_3(T)$, let $\ell_1, \ell_2, \ldots, \ell_{\sigma}$ be the terminal vertices of $w$ with $ter_T(w)=\sigma$, and let $s_i$ be the neighbor of $w$ lying on the $w-\ell_i$ path, where $i \in \{1,2,\ldots, \sigma\}$; further, let $P^{i}$ denote the $s_{i}-\ell_{i}$ path. Let $g: V(T) \rightarrow [0,1]$ be a minimum $k$-resolving function of $T$. If $M_2(T) \neq \emptyset$, then, for any $v \in M_2(T)$, $R\{r_1, r_2\}=V(T_{v})-\{v\}$ and $g(R\{r_1, r_2\})=g(V(T_{v})-\{v\}) \ge k$; thus $\sum_{v\in M_2(T)} g(V(T_{v})) \ge  \sum_{v\in M_2(T)}k=k|M_2(T)|$. If $M_3(T) \neq \emptyset$, then, for any $w \in M_3(T)$, notice $R\{s_{i}, s_{j}\}=V(P^{i}) \cup V(P^{j})$ for any distinct $i,j \in \{1,2,\ldots, \sigma\}$. This, together with the argument used in the proof of Proposition~\ref{fkdim_tree}, we have $\sum_{w\in M_3(T)}g(V(T_{w}))\ge \sum_{w\in M_3(T)}\dim_f^k(T_{w})$. Thus, we have $$g(V(T)) \ge \sum_{v \in M_2(T)} g(V(T_v)) +\sum_{w \in M_3(T)}g(V(T_{w})) \ge k|M_2(T)|+\sum_{w \in M_3(T)} \dim_f^k (T_w).$$

Next, we show that $\dim _f^k(T) \le k|M_2(T)|+\sum_{v\in M_3(T)}\dim_f^k(T_v)$. For each $w\in M_3(T)$, let $g_w$ be a minimum $k$-resolving function on $V(T_w)$. For each $w\in M_2(T)$, define a function $h_w$ on $V(T_w)$ such that $h_w(w)=0$ and $h_w(u)=\frac{k}{|V(T_w)|-1}$ if $u\neq w$. For $k \in [1, \kappa(T)]$, let $g: V(T) \rightarrow [0,1]$ be the function defined by 
\begin{equation*}
g(u)=\left\{
\begin{array}{ll}
g_w(u) & \mbox{if } u \in V(T_w) \mbox{ for }w \in M_3(T),\\
h_w(u) & \mbox{if } u \in V(T_w) \mbox{ for }w \in M_2(T),\\
0 & \mbox{otherwise}.
\end{array}\right.
\end{equation*}
Note that (i) for each $w\in M_2(T)$, $g(V(T_w))=h_w(V(T_w)-\{w\})=k$; (ii) for each $w\in M_3(T)$, $g(V(T_w))=g_w(V(T_w))=\dim_f^k(T_w) \ge k$; (iii) $g(V(T))=k|M_2(T)|+\sum_{w\in M_3(T)} \dim_f^k(T)$. It suffices to show $g$ is a $k$-resolving function of $T$. Obviously, $0 \le g(u) \le 1$ for each $u \in V(T)$. So, let $x$ and $y$ be distinct vertices of $T$; we will show that $g(R\{x,y\})\geq k$. Consider three cases: (1) there is a $w\in M^*(T)$ such that $\{x,y\}\subseteq V(T_w)$; (2) there is a $w\in M^*(T)$ such that $x\in V(T_w)$ and $y\notin V(T_w)$; (3) $\{x,y\}\subseteq V(T)-\cup_{w\in M^*(T)}V(T_w)$. In case (1), if $w\in M_3(T)$, then $g(R\{x,y\}) \ge g_w(R\{x,y\} \cap V(T_w))\geq k$, since $g_w$ is a $k$-resolving function on $V(T_w)$; if $w\in M_2(T)$ and $d(x,w)\neq d(y,w)$, then there is a $z\in M^*(T)-\{w\}$ such that $g(R\{x,y\})\geq g(V(T_z))\geq k$; if $w\in M_2(T)$ and $d(x,w)=d(y,w)$, then $g(R\{x,y\})=h_w(V(T_w)-\{w\})=k$. In case (2), by Lemma~\ref{lemma_tree}, either $R\{x,y\} \supseteq V(T_w)$ or $R\{x,y\} \supseteq V(T_{w'})$ for some $w' \in M^*(T)-\{w\}$; thus $g(R\{x,y\}) \ge k$. So, we consider case (3). Note that $x\in V(T_{x'})$ for some $x' \in M_1(T)$ or $x\not\in V(T_z)$ for any $z\in M(T)$; similarly, $y\in V(T_{y'})$ for some $y' \in M_1(T)$ or $y\not\in V(T_{z'})$ for any $z'\in M(T)$. If $\{x,y\} \subseteq V(T_v)$ for some $v\in M_1(T)$, then $d(v,x) \neq d(v,y)$ and there exist distinct $v', v'' \in M^*(T)$ such that $v$ lies on the $v'-v''$ path in $T$; thus $g(R\{x,y\}) \ge g(V(T_{v'}))+g(V(T_{v''})) \ge 2k$. If $\{x,y\} \not\subseteq V(T_v)$ for any $v\in M(T)$, there exist distinct $w_1, w_2 \in M^*(T)$ such that both $x$ (or $x'$) and $y$ (or $y'$) lie on the $w_1-w_2$ path in $T$; then $d(w_1,x)=d(w_1,y)$ and $d(w_2,x)=d(w_2,y)$ imply either $x=y$ or $\{x,y\} \subseteq V(T_{s})$ for some $s\in M_1(T)$, where both possibilities contradict the present assumptions. Thus $R\{x,y\} \supseteq V(T_{w_i})$ for at least one $i\in\{1,2\}$, and $g(R\{x,y\}) \ge g(V(T_{w_i})) \ge k$.~\hfill
\end{proof}

Next, we provide an example showing that $\dim_f^k(G) - k\dim_f(G)$ can be arbitrarily large for some $k \in (1,\kappa(G)]$. 

\begin{remark}
The value of $\dim_f^k(G) - k\dim_f(G)$ can arbitrarily large, as $G$ varies, for some $k \in (1,\kappa(G)]$. Let $T$ be a tree with $ex(T)=1$. Let $v$ be the exterior major vertex of $T$ and let $\ell_1, \ell_2, \ldots, \ell_{\alpha}$ be the terminal vertices of $T$ such that $d(v, \ell_1)=1<\beta=d(v, \ell_j)$ for each $j \in \{2,3,\ldots, \alpha\}$, where $\alpha \ge 3$. By Proposition~\ref{fkdim_tree}, $\kappa(T)=\beta+1$ and $\dim_f^{\beta+1}(T)=(\alpha-1)(\beta+1)-(\alpha-2)=(\alpha-1)\beta+1$. Since $\dim_f(T)=\frac{\alpha}{2}$ by Theorem~\ref{thm_frac}(b), $\dim_f^{\beta+1}(T)-(\beta+1)\dim_f(T)=(\alpha-1)\beta+1-(\beta+1)(\frac{\alpha}{2})=(\frac{\alpha}{2}-1)(\beta-1)$ can be arbitrarily large, as $\alpha$ or $\beta$ gets big enough.
\end{remark}

Next, we determine the fractional $k$-metric dimension of cycles.

\begin{proposition}\label{fkdim_cycle}
Let $C_n$ be an n-cycle, where $n \ge 3$. Then
\begin{equation}\label{eq_cycle}
\dim_f^k(C_n)=k \dim_f(C_n)=\left\{
\begin{array}{ll}
\frac{kn}{n-2} & \mbox{ if $n$ is even and }k\in[1,n-2],\\[\smallskipamount]
\frac{kn}{n-1} & \mbox{ if $n$ is odd and }k\in[1,n-1].
\end{array}\right.
\end{equation}
\end{proposition}

\begin{proof}
Note that $\kappa(C_n)=n-2$ for an even $n$, and $\kappa(C_n)=n-1$ for an odd $n$. Let $k \in [1,\kappa(C_n)]$. For an even $n \ge 4$, a function $g:V(C_n) \rightarrow [0,1]$ defined by $g(u)=\frac{1}{n-2}$, for each $u \in V(C_n)$, is a minimum resolving function of $C_n$: (i) $0 < g(u)=\frac{1}{n-2} \le \frac{1}{k} \le 1$ since $n \ge 4$; (ii) for distinct $x,y \in V(C_n)$, $|R\{x,y\}| \ge n-2$, and thus $g(R\{x,y\}) \ge (n-2) (\frac{1}{n-2})=1$; (iii) $g(V(C_n))=\frac{n}{n-2}=\dim_f(C_n)$ by Theorem~\ref{thm_frac}(d). Similarly, for an odd $n \ge 3$, one can easily check that a function $h:V(C_n) \rightarrow [0,1]$ defined by $h(u)=\frac{1}{n-1}$, for each $u \in V(C_n)$, is a minimum resolving function of $C_n$ satisfying $h(u) \le \frac{1}{k}$. By Lemma~\ref{fdim_fkdim2} and Theorem~\ref{thm_frac}(d), (\ref{eq_cycle}) follows.~\hfill
\end{proof}

\begin{remark}
Note that, for any fixed $k \in [1, \kappa(C_n)]$, $\lim_{n \rightarrow \infty}\dim_f^k(C_n)=k$ by Proposition~\ref{fkdim_cycle} (c.f. Proposition~\ref{characterization}(a)).
\end{remark}

Next, we determine the fractional $k$-metric dimension of wheel graphs.

\begin{proposition}
For the wheel graph $W_n$ of order $n \ge 5$,
\begin{equation}\label{eq_wheel}
\dim_f^k(W_n)=k \dim_f(W_n)=\left\{
\begin{array}{ll}
2k & \mbox{ if $n=5$ and } k \in [1,2],\\
\frac{3k}{2} & \mbox { if $n=6$ and } k \in [1,4],\\ [\smallskipamount]
\frac{k(n-1)}{4} & \mbox{ if $n \ge 7$ and } k \in [1,4].
\end{array}\right.
\end{equation}
\end{proposition}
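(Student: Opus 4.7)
The plan is to reduce each of the three cases to an application of Corollary~\ref{cor_eq} combined with Theorem~\ref{thm_frac}(e), by first computing $\kappa(W_n)$ and then exhibiting a minimum resolving function whose values are uniformly bounded above by $1/\kappa(W_n)$.

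For $n=5$, the rim is a $4$-cycle, so opposite rim vertices are twins; thus $\kappa(W_5)=2$. I would verify that the function $g$ defined by $g(c)=0$ at the hub $c$ and $g(u_i)=\tfrac{1}{2}$ at each rim vertex is a resolving function of $W_5$ (twin pairs have $R$ of size $2$; adjacent rim pairs have $R$ equal to the whole rim; hub-rim pairs $\{c,u_i\}$ have $R=\{c,u_i,u_{i+2}\}$), with $g(V(W_5))=2=\dim_f(W_5)$ by Theorem~\ref{thm_frac}(e). Since $g(v)\le\tfrac{1}{2}=1/\kappa(W_5)$ for every $v$, Corollary~\ref{cor_eq} yields $\dim_f^k(W_5)=2k$ for $k\in[1,2]$. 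For $n=6$ with rim $u_1,\ldots,u_5$, a direct case-check for the three pair-types (adjacent rim, rim-distance-two rim, and hub-rim) shows $|R\{x,y\}|=4$ in each case, so $\kappa(W_6)=4$; then $g\equiv\tfrac{1}{4}$ on all six vertices satisfies $g(R\{x,y\})\ge |R\{x,y\}|/4\ge 1$ and $g(V(W_6))=\tfrac{3}{2}=\dim_f(W_6)$, so Corollary~\ref{cor_eq} delivers $\dim_f^k(W_6)=\tfrac{3k}{2}$ on $[1,4]$.

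For $n\ge 7$, the same reasoning with the rim $C_{n-1}$ shows $\kappa(W_n)=4$: for adjacent rim vertices $u_i,u_{i+1}$, the hub and every rim vertex at rim-distance $\ge 2$ from both are not in $R\{u_i,u_{i+1}\}$, leaving exactly $\{u_{i-1},u_i,u_{i+1},u_{i+2}\}$. Set $g(c)=0$ and $g(u_i)=\tfrac{1}{4}$ on each rim vertex, so $g(V(W_n))=\tfrac{n-1}{4}=\dim_f(W_n)$ by Theorem~\ref{thm_frac}(e). For any rim-rim pair, $c\notin R$ and $|R|\ge 4$, giving $g(R)\ge 1$. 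For a hub-rim pair $\{c,u_i\}$, the set $R\{c,u_i\}$ consists of $c$, $u_i$, and the $n-4$ rim vertices not adjacent to $u_i$ on the rim; since $g(c)=0$, we have $g(R\{c,u_i\})=\tfrac{n-3}{4}\ge 1$ precisely when $n\ge 7$. Corollary~\ref{cor_eq} then gives $\dim_f^k(W_n)=\tfrac{k(n-1)}{4}$ on $[1,4]$.

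The main obstacle is the bookkeeping in computing $R\{x,y\}$ for each pair-type in $W_n$, in particular the hub-rim pairs for $n\ge 7$: because the hub carries weight $0$ in the chosen $g$, it is not enough to know $|R\{c,u_i\}|\ge \kappa(W_n)$; one must verify that the number of \emph{rim} vertices in $R\{c,u_i\}$ is at least $4$, which is exactly where the hypothesis $n\ge 7$ is used. Once this counting is in place for each of the three regimes, the whole proposition becomes a clean application of Corollary~\ref{cor_eq}.
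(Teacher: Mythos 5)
Your proposal is correct and follows essentially the same route as the paper: the same computation of $\kappa(W_n)$ ($2$ for $n=5$, $4$ for $n\ge 6$), the identical choice of minimum resolving functions (hub $0$, rim $\tfrac12$ for $n=5$; constant $\tfrac14$ for $n=6$; hub $0$, rim $\tfrac14$ for $n\ge 7$), and the same appeal to Corollary~\ref{cor_eq} together with Theorem~\ref{thm_frac}(e). Your explicit observation that for $n\ge 7$ one must count \emph{rim} vertices in $R\{c,u_i\}$ (since the hub carries weight $0$), which is exactly where $n\ge 7$ enters, is precisely the point the paper's verification relies on.
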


\begin{proof}
For $n \ge 5$, the wheel graph $W_n=C_{n-1}+K_1$ is obtained from an $(n-1)$-cycle $C_{n-1}$ by joining an edge from each vertex of $C_{n-1}$ to a new vertex, say $v$; let the $C_{n-1}$ be given by $u_1, u_2, \ldots, u_{n-1}, u_1$. Note that $\diam(W_n)=2$ for $n \ge 5$.

\textbf{Case 1: $n=5$.} Note that $\kappa(W_5)=2$ since $R\{u_1, u_3\}=\{u_1, u_3\}$. Let $k \in [1,2]$. Let $g:V(W_5) \rightarrow [0,1]$ be a function defined by $g(v)=0$ and $g(u_i)=\frac{1}{2}$ for each $i \in \{1,2,3,4\}$. Then $g$ is a minimum resolving function of $W_5$: (i) $0 \le g(x) \le \frac{1}{k} \le 1$ for each $x \in V(W_5)$; (ii) for distinct $i,j \in \{1,2,3,4\}$, $g(R\{u_i, u_j\}) \ge g(u_i)+g(u_j)=1$; (iii) for $i \in \{1,2,3,4\}$, $g(R\{v, u_i\}) \ge g(u_i)+g(u_{\ell})=1$, where $u_{\ell} \in V(W_5)-N[u_i]$; (iv) $g(V(W_5))=2=\dim_f(W_5)$ by Theorem~\ref{thm_frac}(e). By Lemma~\ref{fdim_fkdim2} and Theorem~\ref{thm_frac}(e), $\dim_f^k(W_5)=k\dim_f(W_5)=2k$ for $k \in [1,2]$.

\textbf{Case 2: $n \ge 6$.} First, we show that $\kappa(W_n)=4$ in this case. For each $i \in \{1,2,\ldots, n-1\}$, $R\{v, u_i\}=(V(W_n)-N(u_i)) \cup \{v\}$ with $|R\{v, u_i\}|=n-2 \ge 4$. For distinct $i,j \in \{1,2,\ldots,n-1\}$, (i) if $u_iu_j\in E(W_n)$, then $R\{u_i, u_j\}=(N(u_i) \cup N(u_j))-\{v\}$ with $|R\{u_i, u_j\}|=4$; (ii) if $u_iu_j \not\in E(W_n)$ and $|N(u_i) \cap N(u_j)|=1$, then $R\{u_i, u_j\}=(N[u_i] \cup N[u_j])-\{v\}$ with $|R\{u_i, u_j\}|=6$; (iii) if $u_iu_j \not\in E(W_n)$ and $|N(u_i) \cap N(u_j)|=2$, then $R\{u_i, u_j\}=(N[u_i] \cup N[u_j])-(N(u_i) \cap N(u_j))$ with $|R\{u_i, u_j\}|=4$. So, $\kappa(W_n)=4$ for $n \ge 6$.

Second, we determine $\dim_f^k(W_n)$ for $n \ge 6$. Let $k \in [1,4]$. For $n=6$, a function $g: V(W_6) \rightarrow [0,1]$ defined by $g(x)=\frac{1}{4}$, for each $x \in V(W_6)$, is a minimum resolving function of $W_6$: (i) $0 <g(x) \le \frac{1}{k}\le1$ for each $x\in V(W_6)$; (ii) for any distinct $x,y \in V(W_6)$, $g(R\{x,y\}) \ge 4(\frac{1}{4})=1$; (iii) $g(V(W_6))=\frac{3}{2}=\dim_f(W_6)$ by Theorem~\ref{thm_frac}(e). For $n \ge 7$, let $h: V(W_n) \rightarrow [0,1]$ be a function defined by $h(v)=0$ and $h(u_i)=\frac{1}{4}$ for each $i \in \{1,2,\ldots, n-1\}$. Then $h$ is a minimum resolving function of $W_n$: (i) $0 \le h(x) \le \frac{1}{k}\le1$ for each $x \in V(W_n)$; (ii) for each $i \in \{1,2,\ldots, n-1\}$, $|R\{v, u_i\}| \ge n-2 \ge 5$ since $n \ge 7$, and hence $h(R\{v, u_i\}) \ge 4 (\frac{1}{4})=1$; (iii) for distinct $i,j \in \{1,2,\ldots, n-1\}$, $|R\{u_i, u_j\}| \ge 4$ and $v \not\in R\{u_i, u_j\}$, and thus $h(R\{u_i, u_j\}) \ge 4 (\frac{1}{4})=1$; (iv) $h(V(W_n))=\frac{n-1}{4}=\dim_f(W_n)$ by Theorem~\ref{thm_frac}(e). Therefore, by Lemma~\ref{fdim_fkdim2} and Theorem~\ref{thm_frac}(e), (\ref{eq_wheel}) holds for $n\ge 6$ and $k \in [1,4]$.~\hfill
\end{proof}

Next, we determine the fractional $k$-metric dimension of the Petersen graph.

\begin{proposition}
For the Petersen graph $\mathcal{P}$, $\dim_f^k(\mathcal{P})=k\dim_f(\mathcal{P})=\frac{5}{3}k$ for $k \in [1,6]$.
\end{proposition}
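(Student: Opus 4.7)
The plan is to invoke Corollary~\ref{cor_eq}. Since Theorem~\ref{thm_frac}(c) gives $\dim_f(\mathcal{P}) = \frac{5}{3}$ and Lemma~\ref{fdim_fkdim} gives $\dim_f^k(\mathcal{P}) \geq k\dim_f(\mathcal{P}) = \frac{5k}{3}$ for any admissible $k$, the entire task reduces to (i) determining $\kappa(\mathcal{P})$, and (ii) exhibiting a minimum resolving function $g$ of $\mathcal{P}$ with $g(v) \leq 1/\kappa(\mathcal{P})$ at every vertex. The natural candidate is the constant function $g \equiv c$ for an appropriate $c$, which is forced to be $c = 1/6$ because $|V(\mathcal{P})| = 10$ and we need $g(V(\mathcal{P})) = 5/3$.

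First, I would compute $\kappa(\mathcal{P}) = 6$ by a short case analysis that exploits the standard facts that $\mathcal{P}$ is $3$-regular of diameter $2$ and girth $5$, so that adjacent vertices share no common neighbor while non-adjacent vertices share exactly one. If $x,y$ are adjacent, then the two neighbors of $x$ distinct from $y$ lie at distance $1$ from $x$ and distance $2$ from $y$ (and symmetrically for $y$); together with $\{x,y\}$ this puts exactly $6$ vertices in $R\{x,y\}$, while the remaining $4$ vertices sit at distance $2$ from both $x$ and $y$. If $x,y$ are non-adjacent with unique common neighbor $w$, then $w \notin R\{x,y\}$, but $\{x,y\}$ and the two further neighbors of each of $x,y$ give $6$ vertices in $R\{x,y\}$. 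Either way $|R\{x,y\}| = 6$, so $\kappa(\mathcal{P}) = 6$.

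Next, I would set $g(v) = \frac{1}{6}$ for every $v \in V(\mathcal{P})$ and verify directly that $g$ is a resolving function: for any distinct $x,y$ one has $g(R\{x,y\}) = \frac{|R\{x,y\}|}{6} \geq 1$ by the preceding paragraph. Moreover $g(V(\mathcal{P})) = \frac{10}{6} = \frac{5}{3} = \dim_f(\mathcal{P})$ by Theorem~\ref{thm_frac}(c), so $g$ is a minimum resolving function; and plainly $g(v) = \frac{1}{6} = \frac{1}{\kappa(\mathcal{P})}$ for each $v$.

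Finally, Corollary~\ref{cor_eq} applies and yields $\dim_f^k(\mathcal{P}) = k\dim_f(\mathcal{P}) = \frac{5k}{3}$ for all $k \in [1, \kappa(\mathcal{P})] = [1, 6]$. No step is truly an obstacle; the only delicate point is confirming $\kappa(\mathcal{P}) = 6$, and this is handled by the two-case check above using girth~$5$ and diameter~$2$.
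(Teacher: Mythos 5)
Your proposal is correct and follows essentially the same route as the paper: the same adjacent/non-adjacent case analysis (no common neighbor versus exactly one) giving $\kappa(\mathcal{P})=6$, followed by the constant function with value $\frac{1}{6}$. The only cosmetic difference is that you package the final step through Corollary~\ref{cor_eq}, whereas the paper scales directly to $g(v)=\frac{k}{6}$ and invokes Lemma~\ref{fdim_fkdim} for the lower bound---these are the same argument, since Corollary~\ref{cor_eq} is itself proved by that scaling.
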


\begin{proof}
Note that $\mathcal{P}$ is 3-regular and vertex-transitive. Since $\diam(\mathcal{P})=2$, any two distinct vertices in $\mathcal{P}$ are either adjacent or at distance two apart.

We first show that $\kappa(\mathcal{P})=6$. For any distinct $x,y \in V(\mathcal{P})$, $R\{x,y\}=N[x] \cup N[y]-(N(x) \cap N(y))$ and $|R\{x,y\}|=6$: (i) if $xy \in E(\mathcal{P})$, then $N(x) \cap N(y)=\emptyset$ and $x \in N[y]$ and $y \in N[x]$; (ii) if $xy \not\in E(\mathcal{P})$, then $|N(x) \cap N(y)|=1$. So, $\kappa(\mathcal{P})=6$.

Now, let $k \in [1,6]$. Since $\dim_f^k(\mathcal{P}) \ge k\dim_f(\mathcal{P})$ by Lemma~\ref{fdim_fkdim}, it suffices to show that $\dim_f^k(\mathcal{P}) \le k\dim_f(\mathcal{P})$. Let $g: V(\mathcal{P}) \rightarrow [0,1]$ be a function defined by $g(v)=\frac{k}{6}$ for each $v \in V(\mathcal{P})$. Since $0 \le g(v) \le 1$ for each vertex $v \in V(\mathcal{P})$ and $g(R\{x,y\})=6(\frac{k}{6}) \ge k$ for any two distinct $x,y \in V(\mathcal{P})$, $g$ is a $k$-resolving function of $\mathcal{P}$. So, $\dim_f^k(\mathcal{P}) \le |V(\mathcal{P})| (\frac{k}{6})=\frac{10k}{6}=k(\frac{5}{3})=k \dim_f(\mathcal{P})$ by Theorem~\ref{thm_frac}(c).~\hfill
\end{proof}

Next, we determine the fractional $k$-metric dimension of a bouquet of cycles.

\begin{proposition}
Let $B_m$ be a bouquet of $m$ cycles $C^1, C^2, \ldots, C^m$ with a cut-vertex (i.e., the vertex sum of m cycles at one common vertex), where $m \ge 2$; further, let $C^1$ be the cycle of the minimum length among the $m$ cycles of $B_m$. Then, for $ k \in [1, \kappa(B_m)]$, $\dim_f^k(B_m)=k \dim_f(B_m)=km$, where 

\noindent $\kappa(B_m)=\left\{
\begin{array}{ll}
|V(C^1)|-1 & \mbox{ if $C^1$ is an odd cycle},\\
|V(C^1)|-2 & \mbox{ if $C^1$ is an even cycle}.
\end{array}\right.$
\end{proposition}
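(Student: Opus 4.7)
Let $v$ denote the common cut-vertex and write $n_i=|V(C^i)|$, so that $n_1\le n_i$ for each $i$. The plan has two stages: (i) compute $\kappa(B_m)$ by case analysis of $|R\{x,y\}|$; and (ii) exhibit a specific minimum resolving function $g$ of $B_m$ whose values all lie in $[0,\tfrac{1}{\kappa(B_m)}]$, so that Corollary~\ref{cor_eq} yields $\dim_f^k(B_m)=k\dim_f(B_m)=km$ for $k\in(1,\kappa(B_m)]$, the endpoint $k=1$ being Theorem~\ref{thm_frac}(f).

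For stage (i), I classify an unordered pair $\{x,y\}$ of distinct vertices by (a) whether $x,y$ lie in the same cycle and (b) whether $d(v,x)=d(v,y)$. The key observation is that for any $w$ outside the cycle(s) containing $x$ and $y$ one has $d(w,x)=d(w,v)+d(v,x)$ and $d(w,y)=d(w,v)+d(v,y)$, so $w\in R\{x,y\}$ iff $d(v,x)\ne d(v,y)$; in particular, when $d(v,x)=d(v,y)$, $R\{x,y\}$ is confined to the host cycle(s). For two vertices of $C^i$ equidistant from $v$, the standard cycle analysis (as in the proof of Proposition~\ref{fkdim_cycle}) gives $|R\{x,y\}|=n_i-1$ when $n_i$ is odd and $|R\{x,y\}|=n_i-2$ when $n_i$ is even, with the minimum over $i$ realized at $i=1$. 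All other cases yield strictly larger $|R|$: same-cycle pairs with $d(v,x)\ne d(v,y)$ pick up $\sum_{j\ne i}(n_j-1)$ outside vertices; cross-cycle pairs with $d(v,x)\ne d(v,y)$ contain all of $V(C^i)\setminus\{v\}$ in $R\{x,y\}$ (because $d(w,x)\le d(w,v)+d(v,x)<d(w,v)+d(v,y)=d(w,y)$ when $d(v,x)<d(v,y)$); and cross-cycle pairs with $d(v,x)=d(v,y)$ pick up the ``$x$-side'' arcs of $v$ in both $C^i$ and $C^j$. A brief count shows each is at least the single-cycle minimum from $C^1$, so $\kappa(B_m)$ equals $n_1-1$ or $n_1-2$ according to the parity of $n_1$.

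For stage (ii), set $\kappa=\kappa(B_m)$ and label each cycle cyclically as $v=u^i_0,u^i_1,\ldots,u^i_{n_i-1}$. Define $g:V(B_m)\to[0,1]$ by $g(v)=0$ and, for each $i$,
\[
g(u^i_j)=\begin{cases}
1/(n_i-1) & \text{if $n_i$ is odd and } 1\le j\le n_i-1,\\
1/(n_i-2) & \text{if $n_i$ is even and } j\notin\{0,n_i/2\},\\
0 & \text{if $n_i$ is even and } j=n_i/2.
\end{cases}
\]
A direct computation gives $g(V(C^i))=1$ for every $i$, so $g(V(B_m))=m=\dim_f(B_m)$ by Theorem~\ref{thm_frac}(f). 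Since $n_1$ is the minimum cycle length, a short parity check (noting that if $n_1$ is odd and $n_i$ is even then $n_i\ge n_1+1$) yields $n_i-1\ge\kappa$ for odd $n_i$ and $n_i-2\ge\kappa$ for even $n_i$; hence $g(u)\le 1/\kappa$ for every $u\in V(B_m)$.

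The main obstacle is verifying that $g$ is a resolving function; once it is, $g$ becomes a minimum resolving function satisfying the hypothesis of Corollary~\ref{cor_eq}, completing the proof. The within-cycle equidistant case is automatic: $R\{x,y\}$ equals $V(C^i)\setminus\{v\}$ or $V(C^i)\setminus\{v,u^i_{n_i/2}\}$ depending on parity, and $g$ has been calibrated so that the total weight on this set is exactly $1$. The non-equidistant cases inherit weight $\ge 1$ either from a whole ``outside'' cycle (when $m\ge 3$), or, in the residual case $m=2$, from the source cycle $C^i$ whose entire non-$v$ vertex set lies in $R$ by the inequality in stage~(i). The fiddly piece is the cross-cycle equidistant case with $d(v,x)=d(v,y)=d$: here a direct analysis on the cycle shows $R\{x,y\}\cap V(C^i)$ contains the short arc $u^i_1,\ldots,u^i_d$ together with every long-arc vertex $u^i_a$ with $a<n_i/2+d$, whose combined $g$-weight is at least $1/2$ (the worst case being $n_i$ even with $d=1$, which gives exactly $1/2$); symmetrically $C^j$ contributes at least $1/2$, so $g(R\{x,y\})\ge 1$. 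This numerical check, though routine, is the only genuinely delicate step.
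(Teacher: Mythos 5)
Your proposal is correct and follows essentially the same route as the paper: the same case analysis (organized around whether $x,y$ share a cycle and whether $d(v,x)=d(v,y)$) yields $\kappa(B_m)$, and your weight function $g$ is exactly $\frac{1}{k}$ times the $k$-resolving function $h$ the paper constructs (value $\frac{k}{n_i-1}$ on odd cycles, $\frac{k}{n_i-2}$ on even cycles with the antipode of $v$ zeroed out). The only cosmetic difference is that you verify resolvability once at $k=1$ and invoke Corollary~\ref{cor_eq}, whereas the paper verifies the scaled function is $k$-resolving directly; the verification burden, including the delicate cross-cycle equidistant case giving weight exactly $\tfrac12$ per cycle, is identical.
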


\begin{proof}
Let $v$ be the cut-vertex of $B_m$. For each $i \in \{1,2,\ldots, m\}$, let $C^i$ be given by $v, u_{i,1}, u_{i,2}, \ldots, u_{i, r_i}, v$ and let $P^i =C^i-v$; further, let $P^{i,1}$ be the $u_{i,1}-u_{i, \lceil\frac{r_i}{2}\rceil}$ geodesic. Without loss of generality, let $r_1 \le r_2 \le \ldots \le r_m$.

\vspace{.2cc}

\textbf{Claim 1:} If $C^1$ is an odd cycle, then $\kappa(B_m)=r_1=|V(C^1)|-1$; if $C^1$ is an even cycle, $\kappa(B_m)=r_1-1=|V(C^1)|-2$.

\noindent Proof of Claim 1. Let $x$ and $y$ be distinct vertices of $B_m$. First, let $x,y \in V(C^i)$ for some $i \in \{1,2,\ldots, m\}$. If $d(v,x) \neq d(v,y)$, then $R\{x,y\} \supseteq V(C^j)$ with $|R\{x,y\}| \ge |V(C^j)|=r_j+1\ge r_1+1$ for $j \neq i$. If $d(v,x)=d(v,y)$ and $C^i$ is an odd cycle, then $R\{x,y\}=V(P^i)$ with $|R\{x,y\}|=r_i \ge r_1$; notice, for an odd cycle $C^1$, $|R\{u_{1,1}, u_{1, r_1}\}|=r_1$. If $d(v,x)=d(v,y)$ and $C^i$ is an even cycle, then $R\{x,y\}=V(P^i)-\{u_{i, \lceil\frac{r_i}{2}\rceil}\}$ with $|R\{x,y\}|=r_i -1$, where $r_i-1 \ge r_1$ if $C^1$ is an odd cycle, and $r_i-1 \ge r_1-1$ if $C^1$ is an even cycle; notice, for an even cycle $C^1$, $|R\{u_{1,1}, u_{1, r_1}\}|=r_1-1$.

Second, let $x \in V(P^i)$ and $y \in V(P^j)$ for distinct $i,j \in \{1,2,\ldots,m\}$; let $x \in V(P^{i,1})$ and $y \in V(P^{j,1})$, without loss of generality. If $d(v, x)=d(v, y)$, then $R\{x,y\} \supseteq V(P^{i,1}) \cup V(P^{j,1})$ with $|R\{x,y\}| \ge \lceil\frac{r_i}{2}\rceil+\lceil\frac{r_j}{2}\rceil \ge r_1$. So, let $d(v, x) \neq d(v,y)$, say $d(v,x)<d(v,y)$ without loss of generality; then $d(u,x) \neq d(u,y)$ for each $u \in V(P^{i,1})$, and $r_j \ge 3$. If $r_j=3$, then $R\{x,y\}\supseteq V(P^{i,1})\cup V(P^j)$. If $r_j \ge 4$, then at most two vertices in $C^j$ are at equal distance from $x$ and $y$; thus, $R\{x,y\} \supseteq V(P^{i,1}) \cup (V(P^j)-\{w_1,w_2\})$ such that $w_t \in V(P^j)$ with $d(w_t, x)=d(w_t, y)$, where $t \in \{1,2\}$. In each case, $|R\{x,y\}| \ge \lceil\frac{r_i}{2}\rceil+\lceil\frac{r_j}{2}\rceil \ge r_1$.~$\Box$

\vspace{.2cc}

\textbf{Claim 2:} For $k \in [1, \kappa(B_m)]$, $\dim_f^k(B_m)=k\dim_f(B_m)=km$. 

\noindent Proof of Claim 2. Let $k \in [1, \kappa(B_m)]$, and let $h:V(B_m) \rightarrow [0,1]$ be a function defined by
\begin{equation*}
h(u)=\left\{
\begin{array}{ll}
\frac{1}{r_i} & \mbox{ for each } u \in V(P^i) \mbox{ if $C^i$ is an odd cycle},\\
\frac{1}{r_j-1} & \mbox{ for each } u \in V(P^j)-\{u_{j,\lceil\frac{r_j}{2}\rceil}\} \mbox{ if $C^j$ is an even cycle},\\
0 & \mbox{ otherwise}.
\end{array}\right.
\end{equation*}
Note that, for each $i \in \{1,2,\ldots, m\}$, $h(V(P^i))=1$ if $C^i$ is an odd cycle, and $h(V(P^i))=h(V(P^i))-h({u_{i, \lceil\frac{r_i}{2}\rceil}})=1$ if $C^i$ is an even cycle. We also note that $h$ is a minimum resolving function of $B_m$: (i) $0 \le h(u) \le \frac{1}{k} \le1$ for each $u \in V(B_m)$; (ii) if $x,y \in V(C^i)$ with $d(v, x) \neq d(v,y)$, for some $i \in \{1,2,\ldots, m\}$, then $h(R\{x,y\}) \ge h(V(C^j)) \ge 1$ for $j \neq i$; (iii) if $x,y \in V(C^i)$ with $d(v, x)=d(v,y)$ and $x\neq y$, for some $i \in \{1,2,\ldots, m\}$, then $h(R\{x,y\}) = h(V(P^i))=1$ when $C^i$ is an odd cycle, and $h(R\{x,y\}) = h(V(P^i))-h(u_{i, \lceil\frac{r_i}{2}\rceil})=1$ when $C^i$ is an even cycle; (iv) if $x \in V(P^i)$ and $y \in V(P^j)$ for distinct $i,j \in \{1,2,\ldots,m\}$, then $h(R\{x,y\}) \ge \frac{1}{2}h(V(P^i))+\frac{1}{2}h(V(P^j)) \ge 1$ using a similar argument used in the proof of Claim 1; (v) $h(V(B_m))\!=\!m\!=\!\dim_f(B_m)$ by  Theorem~\ref{thm_frac}(f). So, by Lemma~\ref{fdim_fkdim2} and Theorem~\ref{thm_frac}(f), $\dim_f^k(B_m)=k\dim_f(B_m)=km$ for $k \in [1, \kappa(B_m)]$.~$\Box$~\hfill
\end{proof}

Next, we determine the fractional $k$-metric dimension of complete multi-partite graphs.

\begin{proposition}\label{fkdim_multipartite}
For $m \ge 2$, let $G=K_{a_1,a_2, \ldots, a_m}$ be a complete $m$-partite graph of order $n=\sum_{i=1}^{m}a_i \ge 3$, and let $s$ be the number of partite sets of $G$ consisting of exactly one element. Then, for $k \in [1,2]$,
\begin{equation*}
\dim^k_f(G)=k\dim_f(G)=\left\{
\begin{array}{ll}
\frac{k(n-1)}{2} & \mbox{if $s=1$},\\
\frac{kn}{2} & \mbox{otherwise}.
\end{array}\right.
\end{equation*}
\end{proposition}

\begin{proof}
Let $V(G)$ be partitioned into $m$-partite sets $V_1, V_2, \ldots, V_m$ with $|V_i|=a_i$, where $i \in \{1,2,\ldots, m\}$. Without loss of generality, let $a_1 \le a_2 \le \ldots \le a_m$. Note that $\kappa(G)=2$: (i) if $a_m \ge 2$, then, for two distinct $x,y \in V_m$, $R\{x,y\}=\{x,y\}$; (ii) if $a_m=1$, then, for $x\in V_1$ and $y \in V_2$, $R\{x,y\}=\{x,y\}$. Let $k \in [1,2]$.

First, let $s \neq 1$. A function $g: V(G) \rightarrow [0,1]$ defined by $g(v)=\frac{1}{2}$, for each $v \in V(G)$, is a minimum resolving function of $G$: (i) $0<g(v) \le \frac{1}{k} \le 1$ for each $v\in V(G)$; (ii) for any distinct vertices $x, y \in V(G)$, $g(R\{x,y\}) \ge g(x)+g(y)=1$; (iii) $g(V(G))=\frac{n}{2}=\dim_f(G)$ by Theorem~\ref{thm_frac}(g). So, $\dim_f^k(G)=k\dim_f(G)=\frac{kn}{2}$ by Lemma~\ref{fdim_fkdim2} and Theorem~\ref{thm_frac}(g).

Second, let $s=1$. Let $h: V(G) \rightarrow [0,1]$ be a function defined by $h(u)=0$ for $u \in V_1$ and $h(v)=\frac{1}{2}$ for each $v \in V(G)-V_1$. Then $h$ is a minimum resolving function of $G$ : (i) $0 \le h(v) \le \frac{1}{k} \le 1$ for each $v\in V(G)$; (ii) for any two distinct vertices $x, y \in V(G)-V_1$, $h(R\{x,y\}) \ge h(x)+h(y) =1$; (iii) for $x \in V_1$ and $y \in V_i \subseteq V(G)-V_1$, $h(R\{x,y\}) \ge h(V_i) \ge 1$, where $i \in \{2, \ldots, m\}$; (iv) $h(V(G))=\frac{n-1}{2}=\dim_f(G)$ by Theorem~\ref{thm_frac}(g). So, $\dim_f^k(G)=k\dim_f(G)=\frac{k(n-1)}{2}$ for $k \in [1,2]$ by Lemma~\ref{fdim_fkdim2} and Theorem~\ref{thm_frac}(g).~\hfill
\end{proof}

Now, we consider the fractional $k$-metric dimension of grid graphs (i.e., the Cartesian product of two paths). The \emph{Cartesian product} of two graphs $G$ and $H$, denoted by $G \square H$, is the graph with the vertex set $V(G) \times V(H)$ such that $(u,w)$ is adjacent to $(u',w')$ if and only if either $u=u'$ and $ww' \in E(H)$, or $w=w'$ and $uu' \in E(G)$. See Figure~\ref{fig_grid} for the labeling of $P_6 \square P_4$. 

\begin{figure}[ht]
\centering
\begin{tikzpicture}[scale=.7, transform shape]

\node [draw, shape=circle] (4) at  (0,4.5) {};
\node [draw, shape=circle] (3) at  (0,3) {};
\node [draw, shape=circle] (2) at  (0,1.5) {};
\node [draw, shape=circle] (1) at  (0,0) {};
\node [draw, shape=circle] (44) at  (1.5,4.5) {};
\node [draw, shape=circle] (33) at  (1.5,3) {};
\node [draw, shape=circle] (22) at  (1.5,1.5) {};
\node [draw, shape=circle] (11) at  (1.5,0) {};
\node [draw, shape=circle] (444) at  (3,4.5) {};
\node [draw, shape=circle] (333) at  (3,3) {};
\node [draw, shape=circle] (222) at  (3,1.5) {};
\node [draw, shape=circle] (111) at  (3,0) {};
\node [draw, shape=circle] (4444) at  (4.5,4.5) {};
\node [draw, shape=circle] (3333) at  (4.5,3) {};
\node [draw, shape=circle] (2222) at  (4.5,1.5) {};
\node [draw, shape=circle] (1111) at  (4.5,0) {};
\node [draw, shape=circle] (44444) at  (6,4.5) {};
\node [draw, shape=circle] (33333) at  (6,3) {};
\node [draw, shape=circle] (22222) at  (6,1.5) {};
\node [draw, shape=circle] (11111) at  (6,0) {};
\node [draw, shape=circle] (444444) at  (7.5,4.5) {};
\node [draw, shape=circle] (333333) at  (7.5,3) {};
\node [draw, shape=circle] (222222) at  (7.5,1.5) {};
\node [draw, shape=circle] (111111) at  (7.5,0) {};

\draw(1)--(2)--(3)--(4);
\draw(11)--(22)--(33)--(44);
\draw(111)--(222)--(333)--(444);
\draw(1111)--(2222)--(3333)--(4444);
\draw(11111)--(22222)--(33333)--(44444);
\draw(111111)--(222222)--(333333)--(444444);
\draw(1)--(11)--(111)--(1111)--(11111)--(111111);
\draw(2)--(22)--(222)--(2222)--(22222)--(222222);
\draw(3)--(33)--(333)--(3333)--(33333)--(333333);
\draw(4)--(44)--(444)--(4444)--(44444)--(444444);

\node [scale=1] at (-0.7,0) {$(1,1)$};
\node [scale=1] at (-0.7,1.5) {$(1,2)$};
\node [scale=1] at (-0.7,3) {$(1,3)$};
\node [scale=1] at (-0.7,4.5) {$(1,4)$};

\node [scale=1] at (1.5,-0.4) {$(2,1)$};
\node [scale=1] at (2,1.8) {$(2,2)$};
\node [scale=1] at (2,3.3) {$(2,3)$};
\node [scale=1] at (1.5,4.9) {$(2,4)$};

\node [scale=1] at (3,-0.4) {$(3,1)$};
\node [scale=1] at (3.5,1.8) {$(3,2)$};
\node [scale=1] at (3.5,3.3) {$(3,3)$};
\node [scale=1] at (3,4.9) {$(3,4)$};

\node [scale=1] at (4.5,-0.4) {$(4,1)$};
\node [scale=1] at (5,1.8) {$(4,2)$};
\node [scale=1] at (5,3.3) {$(4,3)$};
\node [scale=1] at (4.5,4.9) {$(4,4)$};

\node [scale=1] at (6,-0.4) {$(5,1)$};
\node [scale=1] at (6.5,1.8) {$(5,2)$};
\node [scale=1] at (6.5,3.3) {$(5,3)$};
\node [scale=1] at (6,4.9) {$(5,4)$};

\node [scale=1] at (8.15,0) {$(6,1)$};
\node [scale=1] at (8.15,1.5) {$(6,2)$};
\node [scale=1] at (8.15,3) {$(6,3)$};
\node [scale=1] at (8.15,4.5) {$(6,4)$};

\end{tikzpicture}
\caption{Labeling of $P_6 \square P_4$.}\label{fig_grid}
\end{figure}
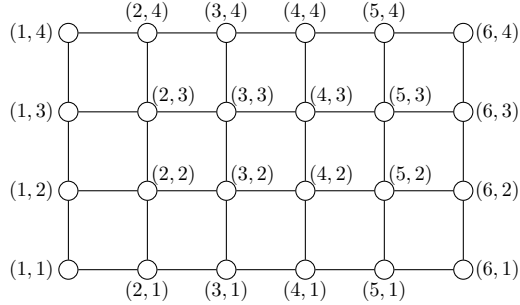
We recall the following result.

\begin{theorem}\emph{~\cite{kdim2}}\label{grid_kdim}
For $s,t \ge 2$, $\kappa(P_s \square P_t)=s+t-2$ and $\dim^k(P_s \square P_t)=2k$, where $k \in \{1,2, \ldots, s+t-2\}$.
\end{theorem}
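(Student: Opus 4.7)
The plan is to establish the two assertions separately: first $\kappa(P_s \square P_t) = s+t-2$, then $\dim^k(P_s \square P_t) = 2k$. Throughout I exploit that $P_s \square P_t$ carries the $L^1$ (taxicab) metric on integer lattice points: for distinct $x=(u_i,w_j)$ and $y=(u_{i'},w_{j'})$, the equidistant locus $E\{x,y\} := V(P_s \square P_t) \setminus R\{x,y\}$ is the intersection of the grid with the $L^1$ perpendicular bisector of $\{x,y\}$.

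For $\kappa$, the upper bound $\kappa \le s+t-2$ is witnessed by the ``off-diagonal corner'' pair $x=(u_1,w_2)$, $y=(u_2,w_1)$. A direct manipulation of $|a-1|+|b-2|=|a-2|+|b-1|$ shows the equation holds iff $(a=1 \wedge b=1) \vee (a \ge 2 \wedge b \ge 2)$, so $|E\{x,y\}| = 1 + (s-1)(t-1)$ and hence $|R\{x,y\}| = s+t-2$. For the matching lower bound, I would run a case analysis on the relative position of $x$ and $y$: (a) same row or same column, where the bisector is a single perpendicular line (possibly empty by parity); (b) the two ``diagonal'' configurations with $i'-i$ and $j'-j$ of the same or opposite sign, where the bisector decomposes into at most two quadrant-like pieces joined by a staircase of length $\min\{|i'-i|,|j'-j|\}$. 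In each case one verifies $|E\{x,y\}| \le (s-1)(t-1)+1$ by counting lattice points in the bisector region restricted to the grid, with equality forced only at a corner configuration.

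For $\dim^k(P_s \square P_t) = 2k$, the lower bound drops out of results already in place: Observation~\ref{obs2}(b) gives $\dim^k \ge \dim_f^k$, Lemma~\ref{fdim_fkdim} gives $\dim_f^k \ge k\dim_f$, and $\dim_f(P_s \square P_t)=2$ by Theorem~\ref{thm_frac}(h); chaining these yields $\dim^k \ge 2k$ for every integer $k \in \{1,\ldots,s+t-2\}$. For the upper bound, I exhibit an explicit $k$-resolving set $S_k$ of size $2k$. A natural candidate places $k$ consecutive vertices along one boundary strip near one corner together with $k$ consecutive vertices along a parallel boundary strip near the opposite corner (wrapping around a corner when $k$ exceeds $\min\{s,t\}$ but stays below $\kappa$); that every $R\{x,y\}$ meets $S_k$ in at least $k$ elements is a finite verification using the structure of $R\{x,y\}$ already extracted for Part (i).

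The main obstacle is the universal lower bound $|E\{x,y\}| \le (s-1)(t-1)+1$ in Part (i). Unlike in the Euclidean setting, the $L^1$ bisector is piecewise (quadrants joined by a staircase), and its lattice-point count inside the grid is sensitive to how close $x$ and $y$ lie to the grid boundary; ensuring every corner- and edge-proximate configuration is covered without double counting or omission is the delicate bookkeeping. Once $\kappa$ and the shape of $R\{x,y\}$ are pinned down, Part (ii) is essentially routine: the lower bound is immediate from prior results in the paper, and the upper bound is a finite check of the concrete set $S_k$.
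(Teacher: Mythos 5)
You should first be aware that the paper itself contains no proof of Theorem~\ref{grid_kdim}: it is quoted from Bailey and Yero~\cite{kdim2} and only \emph{used} (in Proposition~\ref{grid_fkdim} and in the discussion of Problem~2), so your proposal has to stand on its own as a complete argument. Judged that way, two pieces of it are sound: the computation showing $|R\{(u_1,w_2),(u_2,w_1)\}|=s+t-2$, which gives $\kappa(P_s\square P_t)\le s+t-2$; and the lower bound $\dim^k(P_s\square P_t)\ge \dim_f^k(P_s\square P_t)\ge k\dim_f(P_s\square P_t)=2k$ obtained by chaining Observation~\ref{obs2}(b), Lemma~\ref{fdim_fkdim}, and Theorem~\ref{thm_frac}(h). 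That chain is legitimate and non-circular, since none of those three results depends on Theorem~\ref{grid_kdim}, and it is a nice use of the paper's internal machinery.

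The rest has genuine gaps. (1) The matching bound $|R\{x,y\}|\ge s+t-2$ for \emph{every} pair --- which is the entire content of $\kappa(P_s\square P_t)=s+t-2$ --- is only announced: you describe the shape of the $L^1$ bisector and write ``one verifies,'' but no case is carried out, and the decisive case is exactly the degenerate one $|i'-i|=|j'-j|$, where the equidistant set consists of two full two-dimensional quadrants and where all the extremal pairs live. (2) More seriously, your upper-bound construction is not merely unverified; it is wrong. Take $S_k$ to be $k$ consecutive bottom-row vertices starting at the corner $(u_1,w_1)$ together with $k$ consecutive top-row vertices ending at $(u_s,w_t)$, with $k\le s-1$. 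For the pair $x=(u_1,w_2)$, $y=(u_2,w_1)$ one has $R\{x,y\}=\{(u_1,w_j):j\ge 2\}\cup\{(u_i,w_1):i\ge 2\}$, which excludes the corner $(u_1,w_1)$ and contains no top-row vertex other than $(u_1,w_t)$; hence $|S_k\cap R\{x,y\}|\le k-1$ and $S_k$ is not $k$-resolving. Shifting the strips off the corners does not repair this: for $s=6$, $t=4$, $k=2$ and $S_2=\{(u_2,w_1),(u_3,w_1),(u_4,w_4),(u_5,w_4)\}$, the pair $x=(u_3,w_1)$, $y=(u_2,w_2)$ has $R\{x,y\}=\{(u_a,w_1):a\ge 3\}\cup\{(u_a,w_b):a\le 2,\ b\ge 2\}$, which meets $S_2$ only in $(u_3,w_1)$. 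The obstruction is structural: each of the four ``corner'' pairs (such as $x=(u_1,w_2)$, $y=(u_2,w_1)$) has an $R$-set consisting of the two boundary sides incident to that corner minus the corner itself; every boundary vertex lies in exactly two of these four sets and every interior vertex in none, so a set of size $2k$ can only succeed if all its vertices are on the boundary and all four corner constraints are met with equality, and the mid-edge antidiagonal pairs then impose further coordination that rules out concentrating the set near two opposite corners. This is consistent with the paper's own fractional solution in Proposition~\ref{grid_fkdim}, which spreads its weight uniformly over all $2(s+t-2)$ boundary vertices. Designing a correct $2k$-element $k$-resolving set and checking it against all pairs is the real work of the inequality $\dim^k(P_s\square P_t)\le 2k$, and it is absent from your proposal.
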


\begin{proposition}\label{grid_fkdim}
For $k \in [1, s+t-2]$, $\dim_f^k(P_s \square P_t)=k\dim_f(P_s \square P_t)=2k$, where $s,t \ge 2$.
\end{proposition}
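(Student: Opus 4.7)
The lower bound comes essentially for free: Lemma~\ref{fdim_fkdim} together with Theorem~\ref{thm_frac}(h) gives $\dim_f^k(P_s \square P_t) \ge k\dim_f(P_s \square P_t) = 2k$ for every $k \in [1, s+t-2]$.

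For the matching upper bound, my plan is to apply Corollary~\ref{cor_eq}, which reduces the problem to exhibiting a \emph{minimum} resolving function $g$ of $G := P_s \square P_t$ satisfying $g(v) \le \frac{1}{\kappa(G)} = \frac{1}{s+t-2}$ for every $v$. The standard minimum resolving function (weight $\frac{1}{2}$ on the four corners) fails this pointwise cap whenever $s+t-2 > 2$, so I would instead derive $g$ directly from Theorem~\ref{grid_kdim}. Specifically, let $S$ be a minimum $(s+t-2)$-resolving set of $G$, so that $|S| = 2(s+t-2)$, and set $g = \tfrac{1}{s+t-2}\chi_S$.

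The verification is a short check. Since $s+t-2 \ge 2$, we have $g(v) \in \{0,\tfrac{1}{s+t-2}\} \subseteq [0,1]$, and the cap $g(v) \le \tfrac{1}{\kappa(G)}$ is automatic. For any two distinct $x,y \in V(G)$,
\[
g(R\{x,y\}) \;=\; \frac{|S \cap R\{x,y\}|}{s+t-2} \;\ge\; \frac{s+t-2}{s+t-2} \;=\; 1,
\]
because $S$ is $(s+t-2)$-resolving, so $g$ is a resolving function. Its total weight is $g(V(G)) = \tfrac{|S|}{s+t-2} = 2$, which equals $\dim_f(G)$ by Theorem~\ref{thm_frac}(h); hence $g$ is minimum.

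Corollary~\ref{cor_eq} now delivers $\dim_f^k(G) = k\dim_f(G) = 2k$ for every $k \in (1, s+t-2]$, and the boundary value $k = 1$ is covered by $\dim_f^1(G) = \dim_f(G) = 2$. Together these give $\dim_f^k(G) = 2k$ on the entire interval $[1, s+t-2]$. The only real conceptual step -- and the one where the proof could have gotten stuck -- is realizing that the integer-valued result of Theorem~\ref{grid_kdim} at the maximal value $k = \kappa(G)$ can be rescaled to supply precisely the ``spread-out'' minimum resolving function required by Corollary~\ref{cor_eq}; once that observation is in place, no grid-specific case analysis is needed.
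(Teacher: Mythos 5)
Your proof is correct, and it takes a genuinely different route from the paper's. The paper obtains the lower bound exactly as you do (Lemma~\ref{fdim_fkdim} plus Theorem~\ref{thm_frac}(h)), but for the upper bound it builds an explicit $k$-resolving function directly: uniform weight $\frac{k}{s+t-2}$ on the $2(s+t-2)$ boundary vertices (those of degree $2$ or $3$) and $0$ on interior vertices, followed by a lengthy case analysis over pairs $(u_a,w_b)$, $(u_x,w_y)$ (same row, same column, both differing, with subcases) to verify $g(R\{x,y\})\ge k$. You instead use the full strength of Theorem~\ref{grid_kdim} --- not only $\kappa(P_s\square P_t)=s+t-2$, which is the only part the paper uses, but also $\dim^{s+t-2}(P_s\square P_t)=2(s+t-2)$ --- and rescale a minimum $(s+t-2)$-resolving set $S$ to $g=\frac{1}{s+t-2}\chi_S$, which is then a minimum resolving function satisfying the pointwise cap of Corollary~\ref{cor_eq}; that corollary finishes the argument, with $k=1$ handled by definition. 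Your route is far shorter and needs no grid-specific distance computations, and it isolates a general principle worth recording: whenever $\dim^{\kappa(G)}(G)=\kappa(G)\dim_f(G)$, rescaling a minimum $\kappa(G)$-resolving set produces a function meeting the hypothesis of Corollary~\ref{cor_eq}, hence $\dim_f^k(G)=k\dim_f(G)$ on all of $[1,\kappa(G)]$. This is a genuine transfer from the integral parameter to the fractional one, which is notable given the paper's concluding remark that the integral and fractional constructions ``do not appear to carry'' over to one another. What the paper's route buys in exchange: it is self-contained apart from the value of $\kappa$ (your argument outsources the hard combinatorial work to the computation of $\dim^{s+t-2}$ in~\cite{kdim2}), and it exhibits the optimal fractional function explicitly, showing that the boundary of the grid carries all the weight.
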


\begin{proof}
Let $s \ge t \ge 2$, and let $G=P_s \square P_t$ and $L=\{v\in V(G): 2 \le \deg(v) \le 3\}$. By Theorem~\ref{grid_kdim}, $\kappa(G)=s+t-2$. Let $k \in [1,s+t-2]$. Since $\dim^k_f(G) \ge k \dim_f(G)=2k$ by Lemma~\ref{fdim_fkdim} and Theorem~\ref{thm_frac}(h), it suffices to show that $\dim_f^k(G) \le 2k$. Let $g: V(G) \rightarrow [0,1]$ be a function defined by
\begin{equation*}
g(v)=\left\{
\begin{array}{ll}
\frac{k}{s+t-2} & \mbox{ if } v\in L,\\
0 & \mbox{ otherwise. }
\end{array}\right.
\end{equation*}
Note that $g(V(G))=2k$. We will show that $g$ is a $k$-resolving function of $G$. Clearly, $0 \le g(v) \le 1$ for each $v \in V(G)$. Let $x=(a, b)$ and $y=(c, d)$ be two distinct vertices of $G$. We consider two cases.

\textbf{Case 1: $a=c$ or $b=d$.} If $a=c$, then $R\{x,y\} \cap L\supseteq \cup_{i=1}^{s}\{(i, 1), (i, t)\}$, and thus $g(R\{x,y\}) \ge (2s)(\frac{k}{s+t-2}) \ge k$ since $s \ge t \ge 2$. So, let $b=d$; without loss of generality, let $a<c$. Let $z=(\alpha, \beta) \in L$. Note that (i) if $\alpha \le a$, then $d(z,x)<d(z,y)$ and thus $R\{x,y\} \cap L \supseteq \cup_{j=1}^{t}\{(1, j)\} \cup (\cup_{i=2}^{a}\{(i,1),(i,t)\})$; (ii) if $\alpha \ge c$, then $d(z,x)>d(z,y)$ and thus $R\{x,y\} \cap L \supseteq \cup_{j=1}^{t}\{(s, j)\}\cup (\cup_{i=c}^{s-1}\{(i,1),(i,t)\})$; (iii) if $a<\alpha<c$ and $\beta=1$, then there exists at most one such $z\in L$ satisfying $d(z,x)=d(z,y)$, since $d(z,x)=d(z,y)$ implies $\alpha-a+b-1=c-\alpha+b-1$, i.e., $2\alpha=a+c$; similarly, if $a<\alpha<c$ and $\beta=t$, then there exists at most one such $z \in L$ satisfying $d(z,x)=d(z,y)$. Thus, $|R\{x,y\} \cap L| \ge 2s+2t-6$, and hence $g(R\{x,y\}) \ge (2s+2t-6)(\frac{k}{s+t-2})=2k(\frac{s+t-3}{s+t-2}) \ge k$, since $s+t \ge 4$.

\textbf{Case 2: $a \neq c$ and $b \neq d$.} Without loss of generality, let $a<c$; further, assume that $b<d$ (the case for $b>d$ can be handled similarly). Let $z'=(\alpha', \beta') \in L$. Note that (i) if $\alpha' \le a$ and $\beta'=1$, then $d(z',x)<d(z',y)$ and thus $R\{x,y\} \cap L \supseteq \cup_{i=1}^{a}\{(i, 1)\}$; (ii) if $\alpha' \ge c$ and $\beta'=t$, then $d(z',x)>d(z',y)$ and thus $R\{x,y\} \cap L \supseteq \cup_{i=c}^{s}\{(i, t)\}$; (iii) if $a<\alpha'<c$ (i.e., $c \neq a+1$) and $\beta'=1$, then there exists at most one such $z' \in L$ satisfying $d(z',x)=d(z',y)$, since $d(z',x)=d(z',y)$ implies $\alpha'-a+b-1=c-\alpha'+d-1$, i.e., $2\alpha'=a-b+c+d$; similarly, if $a<\alpha'<c$ and $\beta'=t$, there exists at most one such $z' \in L$ satisfying $d(z',x)=d(z',y)$. Likewise, we note that (i) if $\alpha'=1$ and $\beta' \le b$, then $d(z',x)<d(z',y)$ and thus $R\{x,y\} \cap L \supseteq \cup_{j=1}^{b}\{(1,j)\}$; (ii) if $\alpha' =s$ and $\beta' \ge d$, then $d(z',x)>d(z',y)$ and thus $R\{x,y\} \cap L \supseteq \cup_{j=d}^{t}\{(s, j)\}$; (iii) if $\alpha'=1$ and $b<\beta'<d$ (i.e., $d \neq b+1$), then there exists at most one such $z' \in L$ satisfying $d(z',x)=d(z',y)$, since $d(z',x)=d(z',y)$ implies $a-1+\beta'-b=c-1+d-\beta'$, i.e., $2\beta'=-a+b+c+d$; similarly, if $\alpha'=s$ and $b<\beta'<d$, then there exists at most one such $z' \in L$ satisfying $d(z',x)=d(z',y)$. So, if $c=a+1$ or $d=b+1$, then $|R\{x,y\} \cap L| \ge s+t-2$; if $c \ge a+2$ and $d \ge b+2$, then $|R\{x,y\} \cap L| \ge a+(s-c+1)+2(c-a-2)+b+(t-d+1)+2(d-b-2) \ge s+t-2$. In each case, $g(R\{x,y\}) \ge (s+t-2)(\frac{k}{s+t-2}) = k$.

Thus, in each case, $g$ is a $k$-resolving function of $G$, and hence $\dim_f^k(G) \le g(V(G))=2k$. Therefore, $\dim_f^k(G)=k\dim_f(G)=2k$ for $k \in [1, s+t-2]$ for $s \ge t \ge 2$.~\hfill
\end{proof}


\section{Open Problems}

We conclude this paper with two open problems.\\

\textbf{Problem 1.} Let $\phi(k)=\dim_f^k(G)$ be a function of $k$, for a fixed $G$, on domain $[1, \kappa(G)]$. Is $\phi$ a continuous function of $k$ on every connected graph $G$?\\

\textbf{Problem 2.} Suppose $\dim_f^k(G)$ is given by $\psi(k)$ for integral values of $k$. When and how can we interpolate $\psi$ and deduce $\dim_f^k(G)$ for any real number $k \in [1, \kappa(G)]$?

For example, let $G=P_s \square P_t$, where $s,t \ge 2$. Then $\dim_f^k(G)=2k$ for integers $k \in \{1,2,\ldots, \kappa(G)\}$ by Theorems~\ref{thm_frac}(h), Lemma~\ref{fdim_fkdim}, Observation~\ref{obs2}(b), and Theorem~\ref{grid_kdim}. In Proposition~\ref{grid_fkdim}, we proved that $\dim_f^k(G)=2k$ for any real number $k \in [1, \kappa(G)]$, by using Lemma~\ref{fdim_fkdim} and constructing a $k$-resolving function $g$ on $V(G)$ with $g(V(G))=2k$ for $k \in [1,\kappa(G)]$. The construction of $k$-resolving function for any real number $k \in [1, \kappa(G)]$ in determining $\dim_f^k(G)$ in Proposition~\ref{grid_fkdim} does not appear to carry to the construction of $k$-resolving set for any integral values $k \in \{1,2,\ldots,\kappa(G)\}$ in determining $\dim^k(G)$, and vice versa.

\bigskip  


\textbf{Acknowledgement.} The authors greatly appreciate Dr. Douglas J. Klein for graciously hosting Dr. Ismael G. Yero during his visit to Texas A\&M University at Galveston -- this visit sparked an ongoing collaboration of which the present paper is a product. The authors also thank the anonymous referees for some helpful comments.

\end{document}